\documentclass[11pt]{article}

\usepackage[T1]{fontenc}
\usepackage{lmodern}
\usepackage{microtype}
\usepackage{amsmath,amssymb,amsthm,mathtools}
\usepackage{enumitem}
\usepackage{needspace}
\usepackage{aliascnt}
\usepackage{geometry}
\usepackage[hidelinks]{hyperref}
\hypersetup{
  pdftitle={Bistationary Traces, Wide Levels, and Branch-Cover Rigidity for an Unrestricted Typed Variant of the Hayut--Magidor Forcing},
  pdfauthor={Xing-Yu Hu},
  pdfsubject={Set theory and forcing},
  pdfkeywords={bistationary traces, ladder-coordinate sets, two-cardinal trees, forcing, strategic closure, branch-covering numbers, inaccessible cardinals}
}
\usepackage[nameinlink,capitalize,noabbrev]{cleveref}
\newcommand{\PP}{\mathcal P}
\newcommand{\cf}{\operatorname{cf}}
\newcommand{\forces}{\Vdash}
\newcommand{\Br}{\operatorname{Br}}
\newcommand{\bcov}{\operatorname{bcov}}
\newcommand{\Tr}{\operatorname{Tr}}
\newcommand{\domn}{\operatorname{dom}}
\newcommand{\rng}{\operatorname{range}}
\newcommand{\Sstar}{\mathbb S^{\ast}}

\newtheorem{theorem}{Theorem}[section]
\newaliascnt{lemma}{theorem}
\newtheorem{lemma}[lemma]{Lemma}
\aliascntresetthe{lemma}
\newaliascnt{proposition}{theorem}
\newtheorem{proposition}[proposition]{Proposition}
\aliascntresetthe{proposition}
\newaliascnt{corollary}{theorem}
\newtheorem{corollary}[corollary]{Corollary}
\aliascntresetthe{corollary}
\theoremstyle{definition}
\newaliascnt{definition}{theorem}
\newtheorem{definition}[definition]{Definition}
\aliascntresetthe{definition}

\crefname{theorem}{Theorem}{Theorems}
\crefname{lemma}{Lemma}{Lemmas}
\crefname{proposition}{Proposition}{Propositions}
\crefname{corollary}{Corollary}{Corollaries}
\crefname{definition}{Definition}{Definitions}

\title{Bistationary Traces, Wide Levels, and Branch-Cover Rigidity\\
       for an Unrestricted Typed Variant of the Hayut--Magidor Forcing}
\author{Xing-Yu Hu\thanks{Corresponding author.}\\
\small School of Mathematics and Statistics, Hanjiang Normal University}
\date{}

\begin{document}
\maketitle

\begin{abstract}
For every uncountable regular cardinal $\alpha$, $\Sstar(\alpha)$ is an
explicitly typed four-coordinate forcing motivated by the ladder-system
construction of Hayut and Magidor.  The forcing is $\sigma$-closed and, after
adjoining a formal maximum, $\alpha$-strategically closed.  For
$\alpha\geq\omega_2$, every nonempty countable family of designated generic
branches has a stationary and costationary common trace on the generic
ladder-coordinate set $L_\alpha$, while no countable family of cofinal branches
generates $L_\alpha$.  These conclusions persist under a Kurepa-style
level-size bound.  In the unrestricted forcing, for every infinite cardinal
$\mu<\alpha$ in the ground model, some level of the generic tree contains a copy
of $({}^\mu2)^V$.  Consequently, the endpoint-corrected restriction family
indexed by $\PP_{\omega_2}\alpha$ is too wide, whereas the scaled restriction
system indexed by $\PP_\alpha\alpha$ has all levels of size less than $\alpha$
exactly when $\alpha$ is strongly inaccessible in the ground model.  When
these equivalent conditions hold, the branch-covering number of $L_\alpha$
relative to the scaled system is at least $\omega_1$.  The low-cofinality empty-value convention also implies that the
set of domains of $L_\alpha$ contains no club in $\PP_\alpha\alpha$.  The
unrestricted tree clause of the motivating presentation is retained, but no
forcing equivalence is asserted.
\end{abstract}

\medskip
\noindent\textbf{2020 Mathematics Subject Classification.}
Primary 03E40. Secondary 03E05.

\noindent\textbf{Key words and phrases.}
Bistationary traces, ladder-coordinate sets, two-cardinal trees, forcing, strategic closure, branch-covering numbers, inaccessible cardinals.

\section{Introduction}

Simultaneous stationary traces and branch-cover obstructions interact with
level-size constraints in a typed forcing motivated by ladder systems on
two-cardinal trees.  For a ladder system on a two-cardinal tree, a
cofinal branch may meet the ladder cofinally, stationarily, or on a club.  Hayut and Magidor proved,
relative to a supercompact cardinal, a separation between $\omega_1$-cofinal and
$\omega_1$-club catching at $\omega_2$
\cite[Theorem~5.2, p.~1121]{HayutMagidor2022}, and asked whether cofinal
catching can be separated from stationary catching
\cite[Question~6.1, p.~1128]{HayutMagidor2022}.  Their four-coordinate
construction combines a binary ordinal tree, coherent designated branches, a
generic ladder, and an auxiliary coordinate
\cite[Definition~5.5(1)--(4) and Notation~5.6, p.~1123]{HayutMagidor2022}.

The notion of a $\PP_\kappa\lambda$-tree goes back to Jech's
strong-compactness characterization \cite[Section~2]{Jech1973}, while Magidor
characterized supercompactness by the corresponding ineffability principles on
$\PP_\kappa\lambda$ for every $\lambda\geq\kappa$
\cite[p.~282]{Magidor1974}.  Subsequent work developed the strong tree
property at accessible cardinals, including two successive cardinals, small
cardinals, and successors of singular cardinals
\cite{Fontanella2012,Fontanella2013,Fontanella2014}.  The generalized
ineffable and super tree principles were related to the Proper Forcing Axiom
and to their motivating large-cardinal characterizations in
\cite{VialeWeiss2011,Weiss2012}.  Their interaction with the singular cardinal
hypothesis was analyzed in \cite{HachtmanSinapova2019}, while their consistency
at successors of singular cardinals was studied further in
\cite{HachtmanSinapova2020,Adkisson2024}.  More recent work relates
generalized tree properties to guessing models, Kurepa trees, and cardinal
arithmetic
\cite{LambieHansonStejskalova2024,LambieHansonStejskalova2026}.  Two-cardinal
Kurepa families give a related bound on the number of restrictions to small sets.  For every uncountable strong
limit $\kappa$ and $\lambda\geq\kappa$, there is a family of
$2^\lambda$ subsets of $\lambda$ with fewer than $\kappa$ distinct
restrictions to each $x\in\PP_\kappa\lambda$
\cite[Definition~2.1(i)--(ii) and Proposition~2.2(iii), p.~4]{Wu2025}.  Together, these developments place
ladder-system catching within the broader theory of two-cardinal branch
principles.

Two features of the published presentation require explicit choices in a typed
formulation.  First, the displayed type of the auxiliary coordinate and the
types used in the closure and no-club-catching arguments do not coincide
\cite[Definition~5.5(4), p.~1123, and Claims~5.7--5.8, p.~1124]{HayutMagidor2022}.  Second, the displayed tree clause imposes
no cardinality restriction on a condition tree, although the generic object is
described as an $\alpha$-tree and Claim~5.7 states a $2^{<\alpha}$ bound for the
size of the forcing
\cite[Definition~5.5(1), Notation~5.6, and Claim~5.7, p.~1123]{HayutMagidor2022}.  Accordingly, $\Sstar(\alpha)$ is an explicitly typed variant retaining the
displayed unrestricted tree clause.  Its auxiliary coordinate is a partial function on
countable sets.  A non-dummy value at $z$ is a node of the restriction level
indexed by $z$, and ladder coordinates of cofinality at most $\omega$ receive
the empty value.  No forcing equivalence with the printed presentation is
claimed.  The precise comparison is given in Subsection~\ref{sec:relation}.
Unless explicitly stated otherwise, cardinal and regularity assumptions on
$\alpha$ refer to the ground model.

The resulting forcing separates the trace and cover phenomena from the width
behavior of the unrestricted tree clause.  Simultaneous bistationary traces and
the countable branch-cover obstruction survive an explicit Kurepa-style
level-size bound, whereas the unrestricted clause drives the wide-level
phenomenon and the exact strong-limit boundary for the scaled restriction
system.

Let $T_\alpha$, $L_\alpha$, and $B_\alpha(u)$ denote the generic tree,
ladder-coordinate set, and designated branches added by $\Sstar(\alpha)$.
For $b\in{}^\alpha2$, put
$\Tr_{\omega_1}(b,L_\alpha)=\{x\in\PP_{\omega_1}\alpha:b\restriction x\in L_\alpha\}$,
and say that $\mathcal B\subseteq{}^\alpha2$ generates $L_\alpha$ when each
$y\in L_\alpha$ is extended by some $b\in\mathcal B$.
\Needspace{10\baselineskip}
\begin{theorem}\label{thm:main}
Let $\alpha\geq\omega_2$ be regular and let
$G\subseteq\Sstar(\alpha)$ be generic.  Then $\alpha$ remains a regular
cardinal in $V[G]$, and the following assertions hold.
\begin{enumerate}[label=\textup{(\roman*)},leftmargin=2.2em]
  \item for every nonempty countable $U\subseteq T_\alpha$, the set
  \[
     \bigcap_{u\in U}
     \Tr_{\omega_1}(B_\alpha(u),L_\alpha)
  \]
  is stationary and costationary in $\PP_{\omega_1}\alpha$.
  \item no countable family of cofinal branches through $T_\alpha$ generates
  $L_\alpha$.
  \item for every infinite cardinal $\mu<\alpha$ in $V$, there are a limit
  ordinal $\delta<\alpha$ and an injection
  \[
     ({}^\mu2)^V\longrightarrow (T_\alpha)_\delta.
  \]
  \item the endpoint-corrected Hayut--Magidor-style restriction family
  \[
     \mathcal T^{\omega_2}_{\alpha,x}
       =\{r\restriction x:r\in(T_\alpha)_{\sup(x)+1}\}
       \qquad(x\in\PP_{\omega_2}\alpha)
  \]
  has a level of size at least $\omega_2$ and hence is not a
  $\PP_{\omega_2}\alpha$-tree.
\end{enumerate}
For $x\in\PP_\alpha\alpha$, put
$\mathcal T_{\alpha,x}=\{r\restriction x:r\in(T_\alpha)_{\sup(x)+1}\}$,
and write $\mathcal T_\alpha$ for this scaled restriction system.  Its levels
have size less than $\alpha$ if and only if $\alpha$ is strongly inaccessible
in $V$.  Under these equivalent conditions, $\mathcal T_\alpha$ is a
$\PP_\alpha\alpha$-tree, and no countable family of its branches generates
$L_\alpha$.  In particular, if $\alpha$ is a successor cardinal in $V$, then
the scaled restriction system is not a $\PP_\alpha\alpha$-tree.
\end{theorem}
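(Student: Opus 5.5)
The plan is to assemble \cref{thm:main} from separate analyses of the forcing. We only see the introduction, so I first fix the intended definition of $\Sstar(\alpha)$, reconstructed from the abstract and from \cite[Definition~5.5]{HayutMagidor2022}. A condition is a quadruple $p=(t^p,\bar b^p,L^p,f^p)$ with the following components.
\begin{itemize}[leftmargin=2em]
\item $t^p$ is a binary tree of successor height $\gamma^p+1<\alpha$, of unrestricted width.
\item $\bar b^p$ assigns to each $u\in t^p$ a designated branch through $t^p$ above $u$, coherently.
\item $L^p$ is a set of ladder coordinates $y=r\restriction x$ with $\sup x\le\gamma^p$.
\item $f^p$ is the partial auxiliary function on countable sets. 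Its value at $z$ is a node of the restriction level indexed by $z$, and coordinates of cofinality $\le\omega$ get the empty value.
\end{itemize}
Extension is end-extension of the tree with coherence of the branches and preservation of $L$ and $f$.

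\textbf{Closure and regularity of $\alpha$.} First I prove $\sigma$-closure: a countable decreasing sequence has its union capped at a top level of countable cofinality. The empty-value convention is exactly what makes this cap free, because no ladder obligation arises at cofinality-$\omega$ points. Next I prove $\alpha$-strategic closure of the poset with a formal maximum. Player Even plays the union at limit stages and adds a top level consisting of the limits of the designated branches $\bar b(u)$, so that every designated branch continues. Strategic closure adds no new sequences of length $<\alpha$, so $\alpha$ stays regular and cardinals $\le\alpha$ are preserved.

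\textbf{Parts (i) and (ii).} These are density arguments run inside a strategic-closure game. For (i), fix a countable $U$, a name $\dot C$ for a club in $\PP_{\omega_1}\alpha$, and a condition $p$. Build a decreasing $\omega$-sequence $p_n$ together with countable $x_n$ such that $p_{n+1}$ decides $\dot C$ up to elements covering $x_n$, and $x_{n+1}\supseteq x_n\cup\{\gamma^{p_n}\}$ is closed under the relevant Skolem functions. Let $x=\bigcup_n x_n$. In the lower bound we may freely decide whether $b_u\restriction x$ goes into $L$ for all $u\in U$ simultaneously; this uses countability of $U$ and the fact that the $b_u\restriction x$ are new coordinates at a level of cofinality $\omega$ above every $\gamma^{p_n}$. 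Putting all of them in gives stationarity, leaving one out gives costationarity. For (ii), suppose $\langle\dot c_n\rangle$ names countably many cofinal branches. By the same fusion, produce $x$ and a lower bound that decides each $\dot c_n\restriction(\sup x+1)$. Then add a coordinate $r\restriction x\in L$ with $r$ a fresh node differing from every decided $c_n$ below $\sup x$. The hypothesis $\alpha\ge\omega_2$ is used here: it makes the relevant $x$ proper subsets with room to diverge.

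\textbf{Parts (iii), (iv) and the scaled system.} For (iii), density: given $p$ and $\mu<\alpha$, extend $t^p$ by a full binary block of height $\mu$ above one node, reaching a limit level $\delta$. This is possible since the tree clause has no cardinality bound. The map $s\mapsto$ (node extending by $s$) injects $({}^\mu2)^V$ into $(T_\alpha)_\delta$, and later extensions preserve it. For (iv), take $\mu=\omega_1$ (so $\mu<\alpha$) and choose $x\in\PP_{\omega_2}\alpha$ of size $\omega_1$ containing the block coordinates. The restrictions to $x$ then give $\ge 2^{\omega_1}\ge\omega_2$ distinct values. For the scaled system, apply (iii) to every $\mu<\alpha$ with $x$ of size $\mu$: if some $2^\mu\ge\alpha$, a level has size $\ge\alpha$. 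Conversely, if $\alpha$ is inaccessible, each level $\mathcal T_{\alpha,x}$ has size $\le 2^{|x|}\cdot|(T_\alpha)_{\sup x+1}\restriction x|<\alpha$; this uses the fact that no new bounded subsets are added. The successor case follows since successors are not strong limits. The cover statement for $\mathcal T_\alpha$ follows from (ii), because branches of $\mathcal T_\alpha$ induce cofinal branches of $T_\alpha$.

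\textbf{Main obstacle.} I expect the hardest step to be (ii) together with strategic closure. One must check that limit stages of length $\ge\omega_1$ create no forced ladder obligations that would block Even's strategy. One must also check that the newly added coordinate in (ii) is compatible with the auxiliary coordinate constraints. I would handle both by the empty-value convention at cofinality $\omega$ and by choosing Even's top levels to contain only limits of designated branches.
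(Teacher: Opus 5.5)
There is a genuine gap, and it lies in parts (i) and (ii). Your treatment of (iii), (iv) and the scaled-system equivalence matches the paper and is fine.

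The gap comes from your reconstruction of the forcing. In $\Sstar(\alpha)$ the coordinate $\ell$ is a function on a closed set of ordinals. At coordinates of cofinality at most $\omega$ its value is empty. At a coordinate $\xi$ of uncountable cofinality it is a whole ladder $\{x\}\cup\{x\restriction z:z\in E_x\}$, with $x\in(t_p)_\xi$ and $E_x$ club in $\PP_{\omega_1}\xi$. Clause~(5) of \cref{def:typed-forcing} forbids any member of a ladder from being an $f$-value.

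So you cannot put $b_u\restriction x$ into $L$ at the cofinality-$\omega$ top $\sup x$. A countable restriction enters $L_\alpha$ only as $x'\restriction z$ with $z\in E_{x'}$, for a node $x'$ at a later coordinate of uncountable cofinality. Conversely, ``leaving one out'' excludes nothing, since a later ladder may contain $x$ in its club.

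For stationarity you are missing the following steps.
\begin{enumerate}
\item Find a countable $a$ forced into $\dot C$, with its supremum reserved by a dummy $f(a)=-1$ (\cref{lem:club-point}).
\item For each $u\in U$, run an $\omega_1$-length fixed-seed construction. It produces a ladder at a coordinate of cofinality $\omega_1$ whose top node is the designated value $b_q(u)$ (\cref{lem:targeted-insertion}). This, not ``room to diverge'', is where $\alpha\geq\omega_2$ is used.
\item Anchor $a$ into each resulting club. This is legal because the only $f$-value with domain $a$ is the dummy (\cref{lem:anchored-insertion}).
\item Take a $\sigma$-closed lower bound.
\end{enumerate}
For costationarity you need the opposite device. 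Decide the branch name along an elementary-submodel fusion, then set the non-dummy value $f(a)=c\restriction a$. By clause~(5), this keeps $c\restriction a$ out of every later ladder (\cref{prop:no-club}).

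Part (ii) fails for the same reason, and two further steps are missing. First, a node $r$ that merely differs from each $c_n$ below $\sup x$ does not prevent $c_n\supseteq r\restriction x$ unless the disagreement points lie in $x$. Second, you must guarantee a candidate that avoids all countably many decided values. The paper secures this by first creating a level with $\omega_1$ nodes (\cref{lem:wide-level} with $\mu=\omega$). It then either:
\begin{itemize}
\item applies \cref{lem:small-cover} to $\omega_1$ distinct designated branches, whose traces are cofinal by (i); or
\item as in \cref{prop:uniform-diagonal}, decides each $\dot c_n\restriction\gamma$ on that level, picks a seed off the decided values, runs the fixed-seed insertion, and chooses $z\in E_x$ containing the disagreement points.
\end{itemize}

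Finally, your strategic-closure sketch does not resolve the obstacle you yourself flag. Take a limit $\delta$ of uncountable cofinality at which $\domn(\ell)$ is cofinal; Odd can arrange this. Closedness then forces a genuine ladder at $\delta$. Its club must avoid $\rng(f)$, and the nontrivial $f$-support must be nonstationary in $\delta$. The empty-value convention plays no role there. The paper has Even reserve dummy suprema at cofinality-$\omega$ tops, and takes $E_x=\{z:\sup z\in C\}$ for the club $C$ of those protected heights. Uniqueness of domain suprema and the freshness clause~(d) then keep this club clear of $\rng(f)$.
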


The empty-value convention also implies the level-domain failure in
\cref{prop:level-domain-failure}, independently of the unrestricted tree clause.
Simultaneous stationarity uses a dummy value to reserve a countable domain
forced into a named club.  Anchored insertion is then applied to countably many
designated seeds while the same dummy is retained.  $\sigma$-closure then gives a
common lower bound putting every corresponding branch restriction into
$L_\alpha$.  Costationarity comes from the complementary non-dummy value and
\cref{prop:no-club}.  Thus every nonempty countable intersection of the
designated traces is bistationary.

The countable branch-cover obstruction has two proofs.  A small-cover argument
uses $\omega_1$ designated branches with stationary traces, while a forcing
argument diagonalizes uniformly against any prescribed countable sequence of
branch names.  The two core trace and cover conclusions also survive the
Kurepa-style level bound of \cref{prop:kurepa-bounded-persistence}, so they do
not depend on unrestricted width.  The unrestricted tree-extension construction produces levels containing copies of $({}^\mu2)^V$ for every infinite cardinal
$\mu<\alpha$ in $V$.  Consequently, the restriction family indexed by
$\PP_{\omega_2}\alpha$ is always too wide.  For the scaled system indexed by
$\PP_\alpha\alpha$, all restriction levels are small precisely when $\alpha$
is strongly inaccessible in $V$.

The conclusions for $\Sstar(\alpha)$ concern its direct generic extension.
They do not settle the question posed by Hayut and Magidor
\cite[Question~6.1, p.~1128]{HayutMagidor2022}.  Immediately before that
question, Hayut and Magidor note that their witnessing branch already meets
the ladder system on a stationary rather than merely unbounded set
\cite[p.~1128]{HayutMagidor2022}.  Here every nonempty countable
intersection of designated traces is bistationary, and the generic
ladder-coordinate set cannot be generated by countably many cofinal branches.

The hypotheses on $\alpha$ have distinct roles.  Ground-model regularity
keeps all strategic limit constructions below $\alpha$, and
$\alpha$-strategic closure preserves $\alpha$ as a regular cardinal.  For the
$\omega_1$-length fixed-seed construction, the additional bound
$\alpha\geq\omega_2$ keeps the supremum of the tree heights below $\alpha$ and leaves room for
a ladder coordinate of cofinality $\omega_1$.  The unrestricted tree clause makes the Hayut--Magidor-style restriction
family too wide and gives the exact strong-limit boundary for the scaled
$\PP_\alpha\alpha$ system.

\section{The Typed Forcing and Branch Covers}

\subsection{Relation to the Hayut--Magidor construction}\label{sec:relation}

The published presentation motivates the typing choice for the $f$-coordinate.
The displayed
Definition~5.5(4) requires pairs $(z,r)$ with
$r\in t_{\sup z}\cup\{-1\}$
\cite[Definition~5.5(4), p.~1123]{HayutMagidor2022}.  Such a
non-dummy value has ordinal domain.  In contrast, the proof of Claim~5.8 uses the assignment
$f^q(M\cap\delta)=x$ with $\domn(x)=M\cap\delta$, while the proof of
Claim~5.7 writes $f(y)$ for a ladder restriction $y$
\cite[p.~1124]{HayutMagidor2022}.

A non-dummy value at a countable set $z$ is the restriction to $z$ of an
ordinal-tree node.  This makes both $\rng(f)\cap\ell(\xi)$ and
$f(M\cap\delta)=x$ well typed.  Accordingly, the expression $f(y)$ in that proof is interpreted as
$f(\domn(y))$.  At coordinates of cofinality at most $\omega$, this variant imposes the
empty value.  Definition~5.5(2) prescribes the displayed ladder form only at
uncountable cofinality
\cite[Definition~5.5(2), p.~1123]{HayutMagidor2022}, while the proof of
Claim~5.7 uses the empty value at the countable limit in its closure construction
\cite[p.~1124]{HayutMagidor2022}.

The printed order requires only end extension of the $f$-coordinate
\cite[Definition~5.5, p.~1123]{HayutMagidor2022}.  The typed variant requires every new $f$-domain
to have supremum above the previous tree height.  This convention is used in the support and disjointness arguments at
limit stages.

The displayed tree clause in
\cite[Definition~5.5(1), p.~1123]{HayutMagidor2022} imposes no cardinality
restriction on a condition tree.  At the same time, the surrounding
discussion describes the generic object as an $\alpha$-tree in the
$\PP_{\omega_2}\alpha$ sense fixed immediately before that definition
\cite[p.~1122]{HayutMagidor2022}, and
\cite[Claim~5.7, p.~1123]{HayutMagidor2022} states that the forcing has
size $2^{<\alpha}$.  If $\alpha$ is strongly inaccessible, every condition level
nevertheless has size less than $\alpha$, since
$|(t_p)_\xi|\leq2^{|\xi|}<\alpha$.  At successor cardinals the displayed clause
gives no corresponding automatic bound.  The displayed unrestricted tree clause
is retained rather than replaced by a size-controlled alternative.  The unconditional
wide-level and exact strong-limit conclusions in
\cref{lem:wide-level,prop:two-cardinal-boundary} depend essentially on this retained clause.  Because of the remaining typing conventions, forcing equivalence with the
printed presentation is not claimed.

Cummings's standard Kurepa-tree forcing at an inaccessible cardinal imposes
the level bound
$|T_\xi|\leq|\xi|+\omega$ \cite[Example~6.1, p.~794]{Cummings2010}.
\Cref{prop:kurepa-bounded-persistence} shows that the bistationary-trace and
countable branch-cover conclusions remain valid for the companion forcing
obtained by restricting the typed conditions to those satisfying this bound.
Thus the two core trace and cover conclusions are not consequences of
unrestricted width.

The restriction-system passage \cite[p.~1122]{HayutMagidor2022} is
stated for ordinals $\alpha\leq\omega_2$ and indexes levels by $\sup(x)$.
For the regular cardinals considered here, the endpoint-corrected analogue is
$\beta_x=\sup(x)+1$.  The successor is necessary when $x$ has a maximum.

The unchanged $b$-coordinate produces the designated branches
\cite[Definition~5.5(3) and Notation~5.6, p.~1123]{HayutMagidor2022}.  At countable limits, a dummy
value reserves a new supremum.  At uncountable limits, equality between a
ladder restriction and a non-dummy $f$-value forces equality of their domains,
and uniqueness of the corresponding supremum gives the required
$f$--$\ell$ disjointness.  The auxiliary coordinate plays complementary roles in
\cref{thm:stationary-traces,prop:no-club}.  A non-dummy value excludes a
decided restriction, while a dummy value leaves room for a later insertion.

For an arbitrary $\PP_{\omega_2}\lambda$-tree and ladder system in the relevant
intermediate extension, Lemma~5.11 constructs a branch in a lifted model
\cite[Lemma~5.11, pp.~1125--1126]{HayutMagidor2022}.  Claim~5.12 shows that
the further forcing adds no $\omega_1$-cofinally catching branch when the
original generic extension has none
\cite[Claim~5.12, p.~1126]{HayutMagidor2022}.  The final paragraph of the proof of Lemma~5.11
\cite[p.~1127]{HayutMagidor2022} shows that the lifted-model trace contains a club.
This does not determine the traces of the designated family considered here.

\subsection{Definition, closure, and branch covers}\label{sec:preliminaries}

For a cardinal $\rho$ and a set $X$, let
\[
  \PP_\rho X=\{x\subseteq X:|x|<\rho\}.
\]
A subset of $\PP_\rho X$ is \emph{club} if it is unbounded under inclusion
and closed under increasing unions of length less than $\cf(\rho)$.  This is
the convention stated immediately before Definition~4.1 of
\cite[p.~1119]{HayutMagidor2022}.  Throughout this paper, the increasing
sequences in this closure clause are understood to have positive length.  A set
is \emph{stationary} if it meets
every club.  For background on generalized stationary sets, see
\cite[pp.~93--128]{JechStationary2010}.

For cardinals $\kappa\leq\lambda$, a $\PP_\kappa\lambda$-tree is a
system
\[
   \mathcal T=\langle\mathcal T_x:x\in\PP_\kappa\lambda\rangle
\]
such that $\varnothing\neq\mathcal T_x\subseteq{}^x2$, restrictions from
larger levels belong to smaller levels, and
$|\mathcal T_x|<\kappa$ for every $x$.  A branch is a function
$b:\lambda\to2$ satisfying $b\restriction x\in\mathcal T_x$ for every
$x\in\PP_\kappa\lambda$.  This is Jech's notion
\cite[Section~2]{Jech1973}, in the formulation used in
\cite[Definition~4.1, p.~1119]{HayutMagidor2022}.

A \emph{ladder system} on $\mathcal T$ is a set
$L\subseteq\bigcup_{x\in\PP_\kappa\lambda}\mathcal T_x$ such that
$L\cap\mathcal T_x\neq\varnothing$ for club many $x$, and whenever
$\eta\in L\cap\mathcal T_x$ and
$\cf(|x\cap\kappa|)>\omega$, there is a club
$E_\eta\subseteq\PP_{|x\cap\kappa|}x$ satisfying
$\eta\restriction y\in L$ for every $y\in E_\eta$
\cite[Definition~4.2, p.~1119]{HayutMagidor2022}.

Let $\rho$ be an infinite cardinal and let $\lambda$ be an ordinal.  For a
function $b\in{}^\lambda2$ and a set $L$ of partial functions on $\lambda$,
put
\[
   \Tr_\rho(b,L)=
   \{x\in\PP_\rho\lambda:b\restriction x\in L\}.
\]
The function $b$ meets $L$ $\rho$-cofinally if this trace is cofinal under
inclusion, and meets $L$ on a $\rho$-club if the trace contains a club.  Here, club is understood in the sense recalled above.  The source notion
following Definition~4.2 \cite[p.~1119]{HayutMagidor2022} is formulated for a
regular cardinal $\rho\leq\kappa$.  In that scope, the cofinal notions agree when
$\rho=\kappa$.  For $\rho<\kappa$, the definition used here is stronger because the source
definition does not require its witness to lie in $\PP_\rho\lambda$.  The final paragraph of the proof of Lemma~5.11
\cite[p.~1127]{HayutMagidor2022} nevertheless proves the relevant
$\PP_{\omega_1}\lambda$-trace unbounded, so
that application satisfies the small-domain form used here.  A trace is
\emph{bistationary} if it and its complement are stationary.  In
particular, a bistationary trace is stationary but contains no club.

For a set $S$ of ordinals, ``nowhere stationary'' means that
$S\cap\beta$ is nonstationary in $\beta$ for every $\beta$ of
uncountable cofinality.  This meaning is used for the support of the
$f$-coordinate below.

Throughout, $-1$ denotes a formal symbol that is not a binary function.
Binary sequences are ordered by extension.  A tree
$t\subseteq{}^{\leq\gamma}2$ is \emph{normal} if it is nonempty,
downward closed, and every node has an extension on every higher level below
$\gamma+1$.

For such a tree $t$ and $z\in\PP_{\omega_1}\gamma$, let
\[
   \widehat z=\sup\{\xi+1:\xi\in z\}\leq\gamma,
   \qquad
   t[z]=\{r\restriction z:r\in t_{\widehat z}\}.
\]
Put $\beta_z=\sup(z)+1$.  If $\beta_z\leq\gamma$, then normality and
downward closure give
\[
   t[z]=\{r\restriction z:r\in t_{\beta_z}\}.
\]
The only remaining case is that $z$ has no maximum and is cofinal in $\gamma$.
Then $\widehat z=\gamma$ and $\beta_z=\gamma+1$, and every normal end
extension $t'$ of $t$ whose top height is at least $\beta_z$ satisfies
\[
   t[z]=\{r\restriction z:r\in t'_{\beta_z}\}.
\]
Thus $t[z]$ agrees with the corresponding restriction level of the generic tree,
while $\widehat z$ keeps the definition internal to the present condition.

\Needspace{5\baselineskip}
\begin{definition}\label{def:typed-forcing}
Let $\alpha$ be an uncountable regular cardinal.  A condition in $\Sstar(\alpha)$ is a tuple
$p=\langle t_p,\ell_p,b_p,f_p\rangle$ with tree height
$\gamma_p+1<\alpha$ satisfying the following clauses.
\begin{enumerate}[label=\textup{(\arabic*)},leftmargin=2.2em]
  \item $t_p\subseteq{}^{\leq\gamma_p}2$ is a normal binary tree.
  \item $\ell_p$ is a function whose domain is closed in $\gamma_p+1$.
  If $\xi\in\domn(\ell_p)$ and $\cf(\xi)\leq\omega$, then
  $\ell_p(\xi)=\varnothing$.  If $\cf(\xi)>\omega$, then for some
  $x\in(t_p)_\xi$ and some club
  $E_x\subseteq\PP_{\omega_1}\xi$,
  \[
       \ell_p(\xi)=\{x\}\cup\{x\restriction z:z\in E_x\}.
  \]
  \item $b_p:t_p\to(t_p)_{\gamma_p}$ and $v\subseteq b_p(v)$ for every
  $v\in t_p$.
  \item $f_p$ is a partial function with
  $\domn(f_p)\subseteq\PP_{\omega_1}\gamma_p$.  For
  $z\in\domn(f_p)$,
  \[
       f_p(z)\in t_p[z]\cup\{-1\}.
  \]
  Distinct members of $\domn(f_p)$ have distinct suprema, and
  \[
       \{\sup z:z\in\domn(f_p),\ f_p(z)\ne-1\}
  \]
  is nowhere stationary.
  \item For every $\xi\in\domn(\ell_p)$,
  \[
       \rng(f_p)\cap\ell_p(\xi)=\varnothing.
  \]
\end{enumerate}
For conditions $p$ and $q$, write $q\leq p$ when the following clauses hold.
\begin{enumerate}[label=\textup{(\alph*)},leftmargin=2.2em]
  \item $t_q\cap{}^{\leq\gamma_p}2=t_p$.
  \item $\ell_q$ extends $\ell_p$ and
  $\domn(\ell_q)\cap(\gamma_p+1)=\domn(\ell_p)$.
  \item $b_p(v)\subseteq b_q(v)$ for every $v\in t_p$.
  \item $f_q$ extends $f_p$, and every
  $z\in\domn(f_q)\setminus\domn(f_p)$ satisfies
  $\sup z>\gamma_p$.
\end{enumerate}
\end{definition}

The $t$- and $b$-clauses and the clause for $\ell$ at coordinates of
uncountable cofinality are those of
\cite[Definition~5.5(1)--(3), p.~1123]{HayutMagidor2022}.  At coordinates of
cofinality at most $\omega$, the empty value is imposed.  For cofinality
$\omega$, this is the value used in the countable-limit construction in the proof of
Claim~5.7, whose final calculation evaluates $f$ at the domain of the ladder
restriction \cite[p.~1124]{HayutMagidor2022}.  Clause~(4) gives a
type-correct version of the assignment in the proof of Claim~5.8
\cite[p.~1124]{HayutMagidor2022}.  In particular, every
non-dummy value $f_p(z)$ has domain exactly $z$.  Consequently, if
$z\subseteq\domn(x)$ and $f_p(w)=x\restriction z$, then $w=z$.

\begin{theorem}\label{thm:closure}
The forcing $\Sstar(\alpha)$ from \cref{def:typed-forcing} is $\sigma$-closed.  After adjoining a formal
maximum condition, it is $\alpha$-strategically closed.  Consequently,
$\Sstar(\alpha)$ is $<\alpha$-distributive and preserves $\alpha$ as a
regular cardinal.
\end{theorem}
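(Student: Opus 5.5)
The plan is to build explicit lower bounds for decreasing sequences. All the work happens at limit stages: a countable limit for $\sigma$-closure, and limits of length below $\alpha$ for the strategic game. In both cases, given $\langle p_i:i<\theta\rangle$ decreasing with $\gamma=\sup_i\gamma_{p_i}$ (where $\gamma<\alpha$ by regularity of $\alpha$), I would define a candidate limit $q$ as follows. Take $t_q=\bigcup_i t_{p_i}\cup\{\text{top level}\}$, where the top level $(t_q)_\gamma$ consists of the unions $\bigcup_i b_{p_i}(v)$ for $v\in\bigcup_i t_{p_i}$. These unions are increasing chains by clause (c), and normality follows from this. Set $b_q(v)=\bigcup_{i\geq i_v}b_{p_i}(v)$. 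Take $f_q=\bigcup_i f_{p_i}$ and $\ell_q=\bigcup_i\ell_{p_i}\cup\{(\gamma,\ell_q(\gamma))\}$, so that $\domn(\ell_q)$ stays closed. Checking clauses (a)--(d) for $q\leq p_i$ is routine. Clause (d) for $f$ is immediate because nothing new is added to $f$. The domain of $f_q$ lies in $\PP_{\omega_1}\gamma$, and suprema remain distinct, since they were distinct in each $p_i$ and the $f_{p_i}$ cohere.

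\textbf{Countable limits.} If $\theta=\omega$, or more generally $\cf(\gamma)=\omega$, put $\ell_q(\gamma)=\varnothing$. Clause (5) is then trivial at the new coordinate, because $\rng(f_q)$ contains no empty function: $z=\varnothing$ would give the value $\varnothing$. If the typing allows $\varnothing\in\domn f$, I would note that its supremum is $0\leq\gamma_p$, so it lies in the base condition, and I would handle this by requiring that $\varnothing\notin\ell$-values, which holds since $\cf(0)\leq\omega$ forces $\ell(0)=\varnothing$ as a set and not $\{\varnothing\}$. Nowhere stationarity of the non-dummy support $S_q$ is preserved. For $\beta<\gamma$, $S_q\cap\beta$ equals $S_{p_i}\cap\beta$ for large $i$, by clause (d) on new suprema. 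At $\beta=\gamma$ of cofinality $\omega$, there is nothing to check. This gives $\sigma$-closure.

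\textbf{Strategic closure.} The game has length $\alpha$ and Player II plays at even stages, including limits. II's strategy will keep a club $C\subseteq\gamma$ of previous heights $\gamma_{p_i}$ that avoids $S$. At each of her moves $p_{i+1}$, II extends to a strictly larger height $\gamma'$ with $\gamma'\notin S$. She also places $\gamma'\in\domn\ell$ with $\ell(\gamma')$ chosen as follows. If $\cf\gamma'=\omega$, it is $\varnothing$. At successor moves she may simply choose $\gamma'$ a successor ordinal of the form $\delta+1$, but then the ladder domain would not be closed at limits. So the strategy instead maintains that the set of heights played is closed, and handles the new limit height at each limit stage.

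\textbf{The main obstacle} is a limit $\gamma$ with $\cf\gamma=\omega_1$, where the $\ell$-value must be a genuine ladder $\{x\}\cup\{x\restriction z:z\in E_x\}$ disjoint from $\rng f_q$, and where $S_q\cap\gamma$ must stay nonstationary. Nonstationarity holds because the heights $\gamma_{p_i}$ form a club in $\gamma$ disjoint from the non-dummy suprema: by clause (d), every new non-dummy $z$ has $\sup z>\gamma_{p_i}$, and $\sup z<\gamma_{p_{i+1}}$. For disjointness, choose $x$ to be the branch through the top level determined by some $v$. A value $f_q(w)=x\restriction z$ forces $w=z$ (as observed after \cref{def:typed-forcing}). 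So it suffices to take $E_x$ to be the club of countable $z\subseteq\gamma$ with $\sup z\in\{\gamma_{p_i}\}$, closed under the relevant functions, together with $z\notin\domn f_q$. Since suprema of $\domn f_q$ are distinct and never equal to a played height (this is clause (d) again, with $\sup z>\gamma_{p_i}$ and $\sup z\leq\gamma_{p_i}$ excluded), this set is club in $\PP_{\omega_1}\gamma$. Also, $x\notin\rng f_q$ because the domain of $x$ is uncountable. The older $\ell$-coordinates remain disjoint because $f_q$ adds no values beyond those of the $p_i$.

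\textbf{Consequences.} Finally, $<\alpha$-distributivity follows from $\alpha$-strategic closure by the standard argument: meeting $<\alpha$ many dense open sets along a play of the game. Hence no new $<\alpha$-sequences are added, so cofinalities below $\alpha$ and hence the regularity of $\alpha$ are preserved.
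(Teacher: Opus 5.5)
\textbf{Gap: Odd's heights can be suprema of non-dummy $f$-domains.} Your architecture is the paper's. You form limit conditions from the union of the trees plus the coherent $b$-limits, Even raises the height at her successor moves, and at limits of uncountable cofinality you build a ladder whose club consists of countable sets with supremum among the played heights. This works for any uncountable cofinality, not only $\omega_1$. The gap is in that last step. You assert that a new non-dummy domain $z$ added at stage $i+1$ satisfies $\sup z<\gamma_{p_{i+1}}$, so that suprema of $\domn f_q$ are never played heights. Clause~(d) only gives $\gamma_{p_i}<\sup z\leq\gamma_{p_{i+1}}$.

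Suppose Odd's move $p_{i+1}$ has top height $h$ of countable cofinality. Odd may then legally set $f(w)=v\restriction w$ for a countable $w$ cofinal in $h$ and some $v\in(t_{p_{i+1}})_h$. So $h$ is a played height lying in the non-dummy support, and your club of all heights does not witness nonstationarity. Moreover, with $x=B(v)$, the set $\{z:\sup z\in\{\gamma_{p_i}\}\}$ contains $w$ with $x\restriction w=f_q(w)$, which violates clause~(5).

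Removing $\domn f_q$ does not help, because the resulting set is not closed. Write $w=A\cup\{b_n:n<\omega\}$ with $A$ cofinal in $h$ and the $b_n$ distinct and outside $A$. Each $z_n=A\cup\{b_0,\dots,b_n\}$ has supremum $h$ and is not in $\domn f_q$, since $w$ is the only domain with supremum $h$. Yet $\bigcup_n z_n=w$.

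\textbf{Repair, and comparison with the paper.} Use only Even's heights $H$. Even strictly raises the height at successor moves, takes suprema at limits, and never adds an $f$-domain, so $H$ is club in $\gamma$. Every domain present when Even reaches $c\in H$ has supremum at most the preceding height, hence below $c$. By clause~(d), every later domain has supremum above $c$. So no member of $\domn f_q$ has supremum in $H$. Then $E_x=\{z\in\PP_{\omega_1}\gamma:\sup z\in H\}$ is club. A collision $x\restriction z=f_q(w)$ would force $w=z$ and hence $\sup w\in H$, which is impossible. The same $H$ witnesses that the support is nonstationary in $\gamma$.

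This dummy-free version is slightly leaner than the paper's. There, Even adds a reserved dummy $f(d_c)=-1$ at each of her countable-cofinality heights and excludes collisions by uniqueness of suprema, which gives $w=d_c$. Those dummies are not needed for closure itself, but they set up the pattern reused in the club-point and insertion lemmas.

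Two smaller points. For $\sigma$-closure you should treat eventually constant heights, in which case the sequence is eventually constant. You should also say how Even starts from the formal maximum, since $\Sstar(\alpha)$ itself has no maximum.
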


\begin{proof}
For $\sigma$-closure, let
$\langle p_n:n<\omega\rangle$ be descending.  If the tree heights are
eventually constant, then the sequence itself is eventually constant by the
end-extension clauses.  Otherwise, pass to a cofinal subsequence with strictly
increasing heights, and let $\delta$ be their supremum.  Regularity of
$\alpha$ gives $\delta<\alpha$, and $\cf(\delta)=\omega$.  Take the union of
the old trees and add the coherent $b$-limits as a new top level.  For an old
node $v$, let its new $b$-value be the corresponding coherent limit, and let
each new top node be its own $b$-value.

End extension makes the unions of the old $\ell$- and $f$-coordinates well
defined.  The union of the old $\ell$-domains is closed below $\delta$.  If
$\beta<\delta$ is a limit point of this union, choose $n$ whose tree height
exceeds $\beta$.  The old domain at stage $n$ is already closed and contains
every point of the eventual union below $\beta$.  If $\delta$ is itself a
limit point of the $\ell$-domains, put an empty ladder-coordinate value there and add a
dummy value $-1$ at a countable set cofinal in $\delta$.  If a prescribed
$a\in\PP_{\omega_1}\delta$ is cofinal in $\delta$, it may serve as this dummy
domain.  Every old $f$-domain has supremum below $\delta=\sup a$, so the new
supremum is fresh and clause~(d) of \cref{def:typed-forcing} holds.  Let $S$ be the union
of the nontrivial supports.  For every $\beta<\delta$, the set $S\cap\beta$
eventually agrees with the corresponding initial segment of one old support.
There is no requirement at $\delta$, whose cofinality is $\omega$, and above
$\delta$ the support is bounded.  Hence the resulting condition is a common
lower bound.

The strategic-closure argument begins with the limit-stage scheme in the proof of
Claim~5.7 \cite[p.~1124]{HayutMagidor2022}.  There,
Even is passive at successor stages \cite[p.~1123]{HayutMagidor2022}.  The typed strategy instead uses prepared
positive Even successor moves to introduce reserved dummy suprema, which are used
in the disjointness verification at limits of uncountable cofinality.  Adjoin a
formal maximum $\mathbf 1$ to $\Sstar(\alpha)$.  Since the original forcing is
dense in the enlarged poset, the generic extensions are unchanged.  Let
$G_\alpha(P)$ denote the game from
\cite[Definition~5.14, p.~793]{Cummings2010}.  Odd plays at odd stages,
Even plays at even stages (including all limit stages), and Even plays
$\mathbf 1$ at stage~$0$.  If Odd repeats $\mathbf 1$ at stage~$1$, Even
first plays any genuine condition of top height $c$ with $\cf(c)=\omega$,
$\ell(c)=\varnothing$, and $f(d_c)=-1$ for some countable set $d_c\subseteq c$
cofinal in $c$.  At every other positive successor stage assigned to her,
Even end-extends Odd's preceding condition to
a fresh top height $c$ of cofinality $\omega$, puts
$\ell(c)=\varnothing$, and adds a countable set $d_c\subseteq c$ cofinal in
$c$ with $f(d_c)=-1$.  After every positive move, Even maintains the
following protection invariant at her top height $c$.  The coordinate $c$
belongs to the $\ell$-domain, and either $\cf(c)=\omega$ and the reserved
dummy is present, or $\cf(c)>\omega$.  The prepared successor move is legal.
The new supremum is fresh, the nontrivial support is unchanged, and the dummy
contributes no binary function to the range of $f$.

Let $0<\varepsilon<\alpha$ be a limit stage.  Along any legal play, stage~$2$
is genuine and all later moves lie below it.  For $2\leq\xi<\varepsilon$, write
$p_\xi=\langle t_\xi,\ell_\xi,b_\xi,f_\xi\rangle$ with tree height
$\gamma_\xi+1$.  If stage~$1$ was genuine, $p_2$ extends it, so starting at
stage~$2$ loses no information.  Earlier limit moves make the height sequence continuous,
while the prepared positive Even successor moves ensure that the Even heights form
a strictly increasing cofinal subsequence.  Thus
\[
   \delta=\sup_{2\leq\xi<\varepsilon}\gamma_\xi<\alpha,
   \qquad
   \cf(\delta)=\cf(\varepsilon).
\]
Indeed, a cofinal subset of $\delta$ of size below $\cf(\varepsilon)$ would
have all of its witnessing stages bounded below $\varepsilon$, so the subset
itself would be bounded in $\delta$, a contradiction.
If $\cf(\varepsilon)=\omega$, apply the $\sigma$-closure construction to a
countable cofinal subsequence, taking the new top height to be $\delta$.  The
Even top heights are cofinal in the union of the $\ell$-domains, so the
construction puts $\ell(\delta)=\varnothing$ and a dummy value at a countable
cofinal subset of $\delta$.  The protection invariant is preserved.

Suppose that $\cf(\varepsilon)>\omega$.  Let
\[
   \widetilde t=\bigcup_{2\leq\xi<\varepsilon}t_\xi,
   \qquad
   B(v)=\bigcup_{\xi_*<\xi<\varepsilon}b_\xi(v),
\]
where $\xi_*\geq2$ is any stage at which $v$ is already present, and put
\[
   t_\varepsilon=\widetilde t\cup
      \{B(v):v\in\widetilde t\}.
\]
Define $b_\varepsilon(v)=B(v)$ on old nodes and let every new top node be its
own $b$-value.  This gives a normal end extension of height $\delta+1$.
The unions of the old $\ell$- and $f$-coordinates are well defined.  The
union of the old $\ell$-domains is closed below $\delta$ by eventual stabilization, and
adding the coordinate $\delta$ closes it.  Every old $f$-value remains
legal because its countable domain is bounded below some earlier top height,
where end extension has already fixed the corresponding restriction set.
Distinct domain suprema and all old $f$--$\ell$ disjointness requirements are
inherited from the descending play.

Let $D$ be the set of top heights occurring at positive Even stages below
$\varepsilon$.  Continuity of the height sequence and the fact that the positive
Even stages are cofinal in $\varepsilon$ make $D$ a club in $\delta$.  No member
of $D$ belongs to the nontrivial support.  For $c\in D$, all $f$-domain
suprema present before $c$ is introduced are below $c$.  At $\cf(c)=\omega$ the
strategy introduces at $c$ only the reserved dummy.  At $\cf(c)>\omega$ it
introduces no $f$-domain there.  The last extension clause in
\cref{def:typed-forcing} requires every $f$-domain added later to have supremum above
$c$.  Consequently, the union of the nontrivial supports is nonstationary in
$\delta$.  At every $\beta<\delta$ of uncountable cofinality, eventual stabilization
ensures that the support is nonstationary in $\beta$, while above $\delta$ the support is bounded.

Let $C\subseteq D$ consist of the protected top heights of cofinality
$\omega$.  The prepared successor moves make $C$ unbounded in $\delta$, and
$C$ is closed under increasing $\omega$-sequences.  The supremum of the
corresponding stages is a limit stage of cofinality $\omega$, where the
strategy again introduces a protected top.  Choose a top node
$x\in(t_\varepsilon)_\delta$ and put
\[
   E_x=\{z\in\PP_{\omega_1}\delta:\sup z\in C\}.
\]
This set is club.  For unboundedness, extend any countable subset of $\delta$
by a countable cofinal subset of a larger member of $C$.  Closure follows from
the $\omega$-closure of $C$ and the positive-length convention for increasing
unions.  Define
\[
   \ell_\varepsilon=
      \bigcup_{2\leq\xi<\varepsilon}\ell_\xi
      \cup\{\langle\delta,
          \{x\}\cup\{x\restriction z:z\in E_x\}\rangle\},
   \qquad
   f_\varepsilon=\bigcup_{2\leq\xi<\varepsilon}f_\xi.
\]
The only new disjointness requirement is at $\delta$.  The top node $x$ is not
an $f$-value because every non-dummy $f$-value has countable domain.  If
$x\restriction z=f_\varepsilon(w)$ for $z\in E_x$, equality of domains gives
$w=z$.  With $c=\sup z\in C$, both $w$ and the reserved dummy domain $d_c$
have supremum $c$.  Uniqueness of domain suprema therefore gives
$w=d_c$.  Hence $f_\varepsilon(w)=-1$, contradicting the assumed equality
$f_\varepsilon(w)=x\restriction z$.  The new ladder coordinate and
all new tree nodes occur strictly above every earlier top height, and the
$b$-values extend all earlier $b$-values.  Thus every order clause holds and
the limit move is a legal condition below the whole play.

This defines Even's strategy through every stage below $\alpha$.  Since
$\alpha$-strategic closure implies $<\alpha$-strategic closure, Cummings's definitions and discussion
\cite[Definition~5.8(3), p.~792, Definition~5.15(1)--(2), p.~793,
and the discussion on p.~794]{Cummings2010} imply that $\Sstar(\alpha)$ adds no
sequence of ordinals of length less than $\alpha$.  Hence it is
$<\alpha$-distributive and preserves $\alpha$ as a regular cardinal.
\end{proof}

By \cref{thm:closure}, the forcing adds no new sequences of ordinals of
length less than $\alpha$.  In particular, it preserves every cardinal
$\leq\alpha$, and $\alpha$ remains regular.

Let $G\subseteq\Sstar(\alpha)$ be generic.  Write
\[
  T_\alpha=\bigcup_{p\in G}t_p,
  \qquad
  \ell_G=\bigcup_{p\in G}\ell_p,
  \qquad
  L_\alpha=\bigcup\rng(\ell_G).
\]
For $u\in T_\alpha$, the designated branch through $u$ is
\[
  B_\alpha(u)=
  \bigcup\{b_p(u):p\in G,\ u\in t_p\}.
\]
Call $L_\alpha$ the generic ladder-coordinate set.  The trace and
covering arguments use this set of partial functions rather than a
ladder system in the full two-cardinal sense.  The notation follows \cite[Notation~5.6, p.~1123]{HayutMagidor2022}.  The explicit expression
$\bigcup\rng(\ell_G)$ separates the node set from its level-indexing function.
When $T_\alpha$, $L_\alpha$, or $B_\alpha(u)$ for a ground-model node $u$
occurs inside a forcing statement, the same symbol denotes the corresponding
$\Sstar(\alpha)$-name.  Check accents on ground-model objects are occasionally
suppressed when no ambiguity can arise.

\begin{proposition}\label{prop:level-domain-failure}
Let $\alpha\geq\omega_2$ be regular and let
$G\subseteq\Sstar(\alpha)$ be generic.  Then, with
\[
   D_\alpha=\{\domn(y):y\in L_\alpha\},
\]
no ordinal $\xi\in[\omega_1,\alpha)$ with
$\cf^{V[G]}(\xi)=\omega$ belongs to $D_\alpha$.  In particular,
$D_\alpha$ does not contain a club in $\PP_\alpha\alpha$.  Consequently,
$L_\alpha$ does not satisfy the club-many-level requirement in the definition
of a ladder system on a $\PP_\alpha\alpha$-tree.
\end{proposition}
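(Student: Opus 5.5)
The plan is to read off the domains of members of $L_\alpha$ directly from clause~(2) of \cref{def:typed-forcing}, and then use \cref{thm:closure} to transfer cofinalities between $V$ and $V[G]$.

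\textbf{Step 1: domains of members of $L_\alpha$.} Let $y\in L_\alpha$. Then $y\in\ell_p(\xi)$ for some $p\in G$ and $\xi\in\domn(\ell_p)$.

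If $\cf^V(\xi)\leq\omega$, clause~(2) forces $\ell_p(\xi)=\varnothing$, so this case cannot occur.

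Hence $\cf^V(\xi)>\omega$, and
\[
   y\in\{x\}\cup\{x\restriction z:z\in E_x\}
\]
with $x\in(t_p)_\xi$ and $E_x\subseteq\PP_{\omega_1}\xi$. So either $\domn(y)=\xi$ with $\cf^V(\xi)\geq\omega_1$, or $\domn(y)=z$ is a countable set. Thus
\[
   D_\alpha\subseteq\{\xi<\alpha:\cf^V(\xi)\geq\omega_1\}\cup\PP_{\omega_1}\alpha .
\]

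\textbf{Step 2: excluding ordinals of cofinality $\omega$.} Let $\xi\in[\omega_1,\alpha)$ with $\cf^{V[G]}(\xi)=\omega$. Then $\xi$ is uncountable, and since $\omega_1$ is preserved, $\xi$ is not a countable set; so $\xi$ can only lie in $D_\alpha$ through the first alternative of Step~1.

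By \cref{thm:closure}, $\Sstar(\alpha)$ adds no new $\omega$-sequences of ordinals. Any cofinal $\omega$-sequence in $\xi$ lying in $V[G]$ therefore lies in $V$, so $\cf^V(\xi)=\omega$. The first alternative requires $\cf^V(\xi)\geq\omega_1$, so $\xi\notin D_\alpha$.

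\textbf{Step 3: no club in $\PP_\alpha\alpha$.} I will use the closure clause with increasing unions of length $\omega$, which is $<\cf(\alpha)=\alpha$; this is where $\alpha\geq\omega_2$ enters. Suppose $C\subseteq D_\alpha$ is club in $\PP_\alpha\alpha$.

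Build an increasing sequence $x_0\subseteq x_1\subseteq\cdots$ in $C$ by unboundedness. Start with $x_0\supseteq\omega_1$, and at each step choose $x_{n+1}\in C$ with
\[
   x_{n+1}\supseteq x_n\cup(\sup(x_n)+1)\cup\{\sup(x_n)\}.
\]
Each $x_n$ has size $<\alpha$, so by regularity of $\alpha$ we have $\sup(x_n)<\alpha$, and the sets chosen remain in $\PP_\alpha\alpha$.

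Let $x=\bigcup_n x_n$. Then $x\in C$ by closure. Moreover $x$ is an ordinal $\xi\in[\omega_1,\alpha)$, namely $\xi=\sup_n\sup(x_n)$, since each $x_{n+1}$ contains the ordinal $\sup(x_n)+1$ as a subset. The sequence $\langle\sup(x_n)\rangle_n$ is cofinal in $\xi$, so $\cf^{V[G]}(\xi)=\omega$. By Step~2, $\xi\notin D_\alpha$, a contradiction.

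\textbf{Step 4: the ladder-system clause.} The club-many-levels requirement asks that $L_\alpha\cap\mathcal T_x\neq\varnothing$ for club many $x$. Any $y$ in that intersection has $\domn(y)=x$, so that set is contained in $D_\alpha$. By Step~3 it cannot contain a club, and the requirement fails.

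\textbf{Main obstacle.} The delicate point is Step~2: making sure cofinality $\omega$ in $V[G]$ matches cofinality $\omega$ in $V$. \Cref{thm:closure} settles this. The remaining care is only ensuring that Step~3 produces an ordinal rather than an arbitrary set, which the choice of the $x_{n+1}$ handles.
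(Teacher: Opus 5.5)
Your Steps 1, 2 and 4 follow the paper's argument. Step 3 is a hands-on version of the paper's second paragraph. The paper intersects the putative club with the club of ordinals in $[\omega_1,\alpha)$, reads off a club subset of $\alpha$, and invokes stationarity of the ordinals of countable cofinality. You instead build the witnessing $\omega$-chain directly. That route works, but your construction needs one repair.

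With $\sup$ the ordinary supremum (the paper's convention, so $\sup(\beta+1)=\beta$), your requirement $x_{n+1}\supseteq x_n\cup(\sup(x_n)+1)\cup\{\sup(x_n)\}$ reduces to $x_{n+1}\supseteq\sup(x_n)+1$. This only gives $\sup(x_{n+1})\geq\sup(x_n)$. If $x_n=\beta+1$, you may take $x_{n+1}=x_n$; the union is then the successor ordinal $\beta+1$, and your claim $\cf^{V[G]}(\xi)=\omega$ fails.

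There are two easy fixes:
\begin{enumerate}
  \item Demand $\sup(x_n)+1\in x_{n+1}$, i.e.\ $x_{n+1}\supseteq\sup(x_n)+2$. The suprema then strictly increase, and $\xi=\sup_n\sup(x_n)$ is a limit ordinal of countable cofinality in $[\omega_1,\alpha)$.
  \item Use Step 1 to note that every uncountable member of $C\subseteq D_\alpha$ is a limit ordinal. Since $x_{n+1}\supseteq\omega_1$, this gives $\sup(x_{n+1})=x_{n+1}>\sup(x_n)$. Equivalently, if the suprema were eventually constant, $\xi$ would be a successor ordinal $\geq\omega_1$ in $D_\alpha$, which Step 1 already excludes.
\end{enumerate}

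Also, $\alpha\geq\omega_2$ is not what makes $\omega<\cf(\alpha)$; that holds for any uncountable regular $\alpha$. It is needed so that $\omega_1\in\PP_\alpha\alpha$, which lets you start the chain at some $x_0\supseteq\omega_1$.
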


\begin{proof}
By \cref{thm:closure}, $\alpha$ remains regular and no new countable sequence
of ordinals is added.  If $\omega_1\leq\xi<\alpha$ and
$\cf^{V[G]}(\xi)=\omega$, then $\cf^V(\xi)=\omega$.  The definition of the
forcing assigns the empty value at the coordinate $\xi$.  Every other member
of $L_\alpha$ has either an ordinal domain of uncountable cofinality or a
countable restriction domain.  Hence $\xi\notin D_\alpha$.

Viewed as subsets of $\alpha$, the ordinals in the interval
$[\omega_1,\alpha)$ form a club in $\PP_\alpha\alpha$.  If $D_\alpha$
contained a club, then its intersection with this ordinal club would determine
a club subset of $\alpha$.  Since the ordinals of cofinality $\omega$ are
stationary in $\alpha$, that club would contain some
$\xi\geq\omega_1$ of cofinality $\omega$, contradicting
$\xi\notin D_\alpha$.
\end{proof}

This level-domain failure comes from the additional
empty-value convention at coordinates of cofinality at most $\omega$.  It
does not use the unrestricted tree clause.

\begin{lemma}\label{lem:designated-branches}
Let $\alpha$ be an uncountable regular cardinal and let
$G\subseteq\Sstar(\alpha)$ be generic.  Then $T_\alpha$ is a normal binary
tree of height $\alpha$, and $B_\alpha(u)$ is a cofinal branch through
$T_\alpha$ for every $u\in T_\alpha$.
\end{lemma}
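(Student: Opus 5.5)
The plan is to combine coherence of the conditions in $G$, enforced by the order clauses, with density arguments in $V$ showing that the heights of conditions in $G$ are unbounded in $\alpha$.

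\emph{Coherence of $G$.} Any two conditions in $G$ have a common extension $r\in G$. By clause~(a), $t_p=t_r\cap{}^{\leq\gamma_p}2$ and likewise for $q$. Hence $t_p$ and $t_q$ are end-segments of one another according to their heights. So $T_\alpha$ is a downward closed subset of ${}^{<\alpha}2$, and for each $\xi$ its level $\xi$ equals $(t_p)_\xi$ for any $p\in G$ with $\gamma_p\geq\xi$.

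\emph{Density of large heights.} For $\beta<\alpha$, let $D_\beta$ be the set of conditions with $\gamma_p\geq\beta$. I will show $D_\beta$ is dense. Given $p$, extend to height $\gamma+1$ with $\gamma\geq\beta$ and $\gamma$ a successor ordinal, so that $\cf(\gamma)$ raises no $\ell$-obligation. Put $t_q=t_p\cup\{s:s\restriction(\gamma_p+1)\in t_p,\ \domn(s)\leq\gamma+1\}$, i.e.\ all binary extensions; this is legal because the tree clause is unrestricted. Also put $\ell_q=\ell_p$, $f_q=f_p$, and for $v\in t_p$ let $b_q(v)=b_p(v)^\frown 0^{\gamma-\gamma_p}$. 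For each new node $s$, let $b_q(s)=s^\frown0\cdots$ up to height $\gamma$.

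We must check that $\domn(\ell_q)=\domn(\ell_p)$ is still closed in $\gamma+1$. It is closed in $\gamma_p+1$ and adds nothing above, so this holds. Clause~(b) holds with no new coordinates, clauses (4)--(5) are unchanged, and clause~(d) is vacuous. Since $\gamma<\alpha$, the condition $q$ is legal.

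By genericity, $T_\alpha$ has nodes at every level $<\alpha$. Its height is exactly $\alpha$ because every $\gamma_p<\alpha$. Normality of $T_\alpha$ follows: given $v\in T_\alpha$ and $\xi>\domn(v)$, pick $p\in G\cap D_\xi$ with $v\in t_p$. Normality of $t_p$ then gives an extension of $v$ at level $\xi$ in $t_p\subseteq T_\alpha$.

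\emph{Designated branches.} Fix $u\in T_\alpha$. By clause~(c) applied to a common extension in $G$, the values $b_p(u)$ for $p\in G$ with $u\in t_p$ are pairwise comparable. So $B_\alpha(u)$ is a function with ordinal domain, and it extends $u$ by clause~(3). Each $b_p(u)\in(t_p)_{\gamma_p}\subseteq T_\alpha$, and $T_\alpha$ is downward closed, so every initial segment of $B_\alpha(u)$ lies in $T_\alpha$. Density of $D_\beta$ makes $\domn(B_\alpha(u))\supseteq\gamma_p$ for arbitrarily large $\gamma_p$, so $B_\alpha(u)\in{}^\alpha2$ is cofinal.

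The only step needing care is the density extension. There one must check that leaving $\ell$ and $f$ unchanged does not violate the closedness of the $\ell$-domain in the new height, and that choosing a successor top height avoids any forced ladder assignment. I expect this to be routine. Alternatively, the strategic moves from \cref{thm:closure} could be quoted, since Even's prepared successor move already end-extends to arbitrarily large heights.
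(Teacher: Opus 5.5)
Your proof is correct and takes essentially the paper's route. You show that end-extensions to arbitrarily large heights, leaving $\ell$ and $f$ untouched, are dense, and then use common extensions in $G$ to get coherence of the $t$- and $b$-coordinates.

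A few minor points. Your formula for $t_q$ is off by one: since $t_p\subseteq{}^{\leq\gamma_p}2$, it should require $s\restriction\gamma_p\in t_p$ and $\domn(s)\leq\gamma$. Choosing $\gamma$ to be a successor is unnecessary, because no $\ell$-coordinate is added. The paper places a single chain above each old top node rather than all binary extensions; that keeps the same step valid in the Kurepa-bounded companion forcing.
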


\begin{proof}
The union of end extensions is a downward-closed binary tree.  Given a
condition $p$ and $\theta<\alpha$, choose
\[
  \max\{\gamma_p,\theta\}<\gamma<\alpha.
\]
End-extend every old top node by a chain to level $\gamma$, extend each old
$b$-value along the corresponding chain, and assign to every new node its
top continuation.  Leave $\ell_p$ and $f_p$ unchanged.  The resulting
condition has height $\gamma+1$, so conditions of height above $\theta$ are
dense.  When applied below a condition containing a fixed node, the same
construction gives an extension of that node to every prescribed higher
level.  Hence $T_\alpha$ is normal and has height $\alpha$.

For a fixed $u$, the values $b_p(u)$ appearing in the generic filter are
coherent by the ordering.  The height-extension operation can be carried out
below every condition containing $u$, so their union has domain $\alpha$.  Each
of its initial segments belongs to an old tree level and hence to
$T_\alpha$.  Thus the union is a cofinal branch through $T_\alpha$.
\end{proof}

\Needspace{12\baselineskip}
Let $\lambda$ be an ordinal and let $L$ be a set of partial binary functions
on $\lambda$.  A family $\mathcal B\subseteq{}^\lambda2$
\emph{generates} $L$ if every $y\in L$ is extended by some member of
$\mathcal B$.  If $T$ is a tree whose cofinal branches are functions on
$\lambda$, write $\Br(T)$ for the family of those branches and define
\[
   \bcov(T,L)=
   \min\bigl\{|\mathcal B|:\mathcal B\subseteq\Br(T)
          \text{ and }\mathcal B\text{ generates }L\bigr\},
\]
with value $\infty$, understood to exceed every cardinal, when there is no such family.

For comparison with two-cardinal trees, if $\kappa\leq\lambda$ are
cardinals and $\mathcal B\subseteq{}^\lambda2$, put
\[
   L^{\kappa}_{\mathcal B}
      =\{c\restriction x:c\in\mathcal B,\ x\in\PP_\kappa\lambda\}.
\]
Thus $\mathcal B$ generates $L^{\kappa}_{\mathcal B}$ in the preceding
sense.

\Needspace{5\baselineskip}
\begin{lemma}\label{lem:small-cover}
Let $\rho$ be an infinite cardinal, let $\lambda$ be an ordinal, and let $L$ be a set
of partial binary functions on $\lambda$.  Suppose that $L$ is generated by
$\mathcal B\subseteq{}^\lambda2$ with $|\mathcal B|<\rho$.  If
$b\in{}^\lambda2$ meets $L$ $\rho$-cofinally, then $b\in\mathcal B$.
\end{lemma}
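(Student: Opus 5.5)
We argue by contradiction: if $b\notin\mathcal B$, then each member of $\mathcal B$ disagrees with $b$ somewhere, and we collect one disagreement point from each to obtain a small set on which no element of $L$ can agree with $b$.

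Suppose $b\notin\mathcal B$. For each $c\in\mathcal B$ we have $c\neq b$, and both are functions on $\lambda$, so we may choose an ordinal $\xi_c<\lambda$ with $c(\xi_c)\neq b(\xi_c)$. Put
\[
   x_0=\{\xi_c:c\in\mathcal B\}.
\]
Since $|\mathcal B|<\rho$, we have $|x_0|<\rho$, and therefore $x_0\in\PP_\rho\lambda$.

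Because $b$ meets $L$ $\rho$-cofinally, the trace $\Tr_\rho(b,L)$ is cofinal under inclusion in $\PP_\rho\lambda$. Hence there is some $x\in\PP_\rho\lambda$ with $x_0\subseteq x$ and $b\restriction x\in L$.

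Since $\mathcal B$ generates $L$, some $c\in\mathcal B$ extends $b\restriction x$. In particular $c(\xi)=b(\xi)$ for every $\xi\in x$. But $\xi_c\in x_0\subseteq x$ and $c(\xi_c)\neq b(\xi_c)$, a contradiction. Therefore $b\in\mathcal B$.

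The case $\mathcal B=\varnothing$ needs no separate treatment. Then $x_0=\varnothing$, so the argument still produces some $y=b\restriction x\in L$, and no member of $\mathcal B$ can extend $y$. This already contradicts the assumption that $\mathcal B$ generates $L$.

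The argument is elementary. The only point requiring care is that the disagreement set $x_0$ must lie in $\PP_\rho\lambda$, which is exactly what the strict bound $|\mathcal B|<\rho$ provides. The "cofinal" clause is used only in its weakest form, namely that some member of the trace lies above a single prescribed small set.
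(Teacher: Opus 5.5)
Your proof is correct and follows the paper's argument essentially verbatim: you pick one disagreement coordinate for each member of $\mathcal B$, use $|\mathcal B|<\rho$ to put that set in $\PP_\rho\lambda$, use cofinality of the trace to find $x$ above it with $b\restriction x\in L$, and derive a contradiction from the generating member of $\mathcal B$. Your separate remark on $\mathcal B=\varnothing$ is harmless; the main argument already covers that case.
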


\begin{proof}
Assume that $b\notin\mathcal B$.  For each $c\in\mathcal B$, choose
$\beta_c<\lambda$ such that $b(\beta_c)\neq c(\beta_c)$.  The set
\[
   a=\{\beta_c:c\in\mathcal B\}
\]
has cardinality at most $|\mathcal B|<\rho$, and hence belongs to
$\PP_\rho\lambda$.
Choose $z\supseteq a$ with $b\restriction z\in L$.  Since $\mathcal B$
generates $L$, some $c\in\mathcal B$ extends $b\restriction z$.  But
$\beta_c\in z$, contradicting the choice of $\beta_c$.
\end{proof}

\begin{corollary}\label{cor:branch-cover-lower-bound}
Let $\rho$ be an infinite cardinal, let $\lambda$ be an ordinal, let $L$ be a
set of partial binary functions on $\lambda$, and let $T$ be a tree whose
cofinal branches are functions on $\lambda$.  If
$\mathcal C\subseteq\Br(T)$ is a family of pairwise distinct branches of
cardinality at least $\rho$ such that every member of $\mathcal C$ meets $L$
$\rho$-cofinally, then
\[
   \bcov(T,L)\geq\rho.
\]
\end{corollary}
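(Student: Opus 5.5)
The plan is to argue by contradiction, using \cref{lem:small-cover}. Suppose $\bcov(T,L)<\rho$. If $\bcov(T,L)=\infty$ there is nothing to prove, since $\infty$ exceeds every cardinal. Otherwise fix a family $\mathcal B\subseteq\Br(T)$ that generates $L$ and has $|\mathcal B|<\rho$. Every member of $\mathcal B$ is a cofinal branch, and hence a function in ${}^\lambda2$, so $\mathcal B$ meets the hypotheses of \cref{lem:small-cover}.

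Now apply \cref{lem:small-cover} to each $b\in\mathcal C$ in turn. Each such $b$ lies in ${}^\lambda2$ and meets $L$ $\rho$-cofinally, so the lemma gives $b\in\mathcal B$. Therefore $\mathcal C\subseteq\mathcal B$.

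The members of $\mathcal C$ are pairwise distinct, so this inclusion yields
\[
  \rho\leq|\mathcal C|\leq|\mathcal B|<\rho,
\]
a contradiction. Hence every generating family drawn from $\Br(T)$ has cardinality at least $\rho$, which is exactly $\bcov(T,L)\geq\rho$.

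None of these steps is a real obstacle, because all the combinatorial work is already contained in \cref{lem:small-cover}. The only point needing care is the convention $\bcov(T,L)=\infty$ when no generating subfamily of $\Br(T)$ exists. That case has to be dispatched separately, as in the first step, before a witnessing family $\mathcal B$ is fixed.
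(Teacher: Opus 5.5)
Your proposal is correct and follows essentially the same route as the paper: assume $\bcov(T,L)<\rho$, take a generating $\mathcal B\subseteq\Br(T)$ of size $<\rho$, use \cref{lem:small-cover} to get $\mathcal C\subseteq\mathcal B$, and contradict $|\mathcal C|\geq\rho$. The separate treatment of $\bcov(T,L)=\infty$ is harmless but vacuous, since that value is already excluded once you suppose $\bcov(T,L)<\rho$.
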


\begin{proof}
Suppose toward a contradiction that $\bcov(T,L)<\rho$.  Then there is a
generating family $\mathcal B\subseteq\Br(T)$ with $|\mathcal B|<\rho$.
By \cref{lem:small-cover}, every member of $\mathcal C$ belongs to
$\mathcal B$, contradicting $|\mathcal C|\geq\rho$.
\end{proof}

The lemma also has a closure interpretation.  Suppose that
$\rho\leq\kappa\leq\lambda$ are infinite cardinals.  Let
$\tau_{<\rho}$ be the topology on ${}^\lambda2$ whose basic neighborhoods
prescribe fewer than $\rho$ coordinates.  For
$\mathcal B\subseteq{}^\lambda2$, a function $b$ meets
$L^{\kappa}_{\mathcal B}$ $\rho$-cofinally if and only if
$b\in\overline{\mathcal B}^{\tau_{<\rho}}$.  Agreement with some member of
$\mathcal B$ on every prescribed small coordinate set is precisely the closure
condition.  Thus \cref{lem:small-cover} is the set-theoretic form of the fact
that a subset of size $<\rho$ is closed in this topology.  Branch terminology is
used because the typed forcing supplies a distinguished family of cofinal branches
and because the invariant $\bcov$ measures generation of the whole target set,
not merely closure of one branch.

The strict cardinal inequality in \cref{lem:small-cover} is sharp, and a branch
may catch the generated set without belonging to the chosen generating family.

\begin{proposition}
Let $\rho<\kappa\leq\lambda$ be infinite cardinals.  There exist a
$\PP_\kappa\lambda$-tree $\mathcal T$, a family
$\mathcal B\subseteq\Br(\mathcal T)$ of size $\rho$, a ladder system
$L=L^{\kappa}_{\mathcal B}$, and a branch $b\notin\mathcal B$ such that
\[
   \Tr_\rho(b,L)=\PP_\rho\lambda.
\]
\end{proposition}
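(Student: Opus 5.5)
The plan is to realize the topological picture described just before the statement: $b$ should lie in the $\tau_{<\rho}$-closure of a family $\mathcal B$ of size $\rho$ without belonging to $\mathcal B$. I would take $b$ to be the constant function $0$ on $\lambda$. For each $i<\rho$ (note $\rho\leq\lambda$), let $c_i\in{}^\lambda2$ be the characteristic function of the singleton $\{i\}$, and put $\mathcal B=\{c_i:i<\rho\}$. These functions are pairwise distinct, so $|\mathcal B|=\rho$, and each $c_i$ differs from $b$ at $i$, so $b\notin\mathcal B$.

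First I would define the tree by
\[
  \mathcal T_x=\{c\restriction x:c\in\mathcal B\cup\{b\}\}\qquad(x\in\PP_\kappa\lambda).
\]
Each level is nonempty and contained in ${}^x2$. Restricting a member of a larger level gives a member of a smaller level. Since $\rho$ is infinite, $|\mathcal T_x|\leq\rho<\kappa$. Hence $\mathcal T$ is a $\PP_\kappa\lambda$-tree, and every member of $\mathcal B\cup\{b\}$ is a branch, so $\mathcal B\subseteq\Br(\mathcal T)$.

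Next I would verify that $L=L^{\kappa}_{\mathcal B}$ is a ladder system. For every $x\in\PP_\kappa\lambda$ we have $L\cap\mathcal T_x\ni c_0\restriction x$, so $L$ meets every level, not merely club many. For the club clause, suppose $\eta=c\restriction x\in L$ and $\mu=|x\cap\kappa|$ has uncountable cofinality. I would take $E_\eta$ to be all of $\PP_\mu x$. For $y$ in this set, $y\subseteq x$ gives $|y|\leq|x|<\kappa$, so $\eta\restriction y=c\restriction y\in L$. The whole space is unbounded. It is closed because an increasing union of fewer than $\cf(\mu)$ sets, each of size $<\mu$, still has size $<\mu$; if $\mu$ is a successor cardinal this is immediate, and if $\mu$ is a limit cardinal it is the usual cofinality argument. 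This closure check is the only step needing care, and it is routine.

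Finally I would compute the trace of $b$. Let $x\in\PP_\rho\lambda$. Since $|x|<\rho$, some $i<\rho$ lies outside $x$. Then $c_i\restriction x$ is identically $0$, so $b\restriction x=c_i\restriction x$. Because $\rho<\kappa$, $x$ also belongs to $\PP_\kappa\lambda$, so $b\restriction x\in L$. Therefore $\Tr_\rho(b,L)=\PP_\rho\lambda$. This also shows that the strict inequality in \cref{lem:small-cover} is sharp: with $|\mathcal B|=\rho$, the conclusion $b\in\mathcal B$ fails.
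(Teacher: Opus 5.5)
Your proposal is correct and follows the paper's own proof essentially step for step. Both use the constant-zero $b$, the functions differing from $b$ at a single coordinate $\xi<\rho$, the tree $\mathcal T_x=\{b\restriction x\}\cup\{c_\xi\restriction x:\xi<\rho\}$, the choice $E_\eta=\PP_{|x\cap\kappa|}x$, and a choice of $\xi\in\rho\setminus x$ for the trace. Your check that $\PP_\mu x$ is closed under increasing unions of length less than $\cf(\mu)$, including the singular case, fills in a step the paper only asserts.
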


\begin{proof}
Regard $\rho$ as a subset of $\lambda$.  Let $b$ be constantly zero.  For
$\xi<\rho$, let $c_\xi$ agree with $b$ except at coordinate $\xi$, and put
$\mathcal B=\{c_\xi:\xi<\rho\}$.  Define
\[
   \mathcal T_x=
   \{b\restriction x\}\cup
   \{c_\xi\restriction x:\xi<\rho\}
   \qquad(x\in\PP_\kappa\lambda).
\]
Every level is nonempty, the system is closed under restrictions, and each
level has size at most $\rho<\kappa$.  Hence $\mathcal T$ is a
$\PP_\kappa\lambda$-tree.  By construction, $b$ and every $c_\xi$ are
branches through $\mathcal T$.  The family $L^{\kappa}_{\mathcal B}$ is a
ladder system.  It meets every level.  If
$\eta=c_\xi\restriction x\in L^{\kappa}_{\mathcal B}$ and
$\cf(|x\cap\kappa|)>\omega$, take
$E_\eta=\PP_{|x\cap\kappa|}x$.  Then $E_\eta$ is club and
$\eta\restriction y=c_\xi\restriction y\in L^{\kappa}_{\mathcal B}$
for every $y\in E_\eta$.

Fix $x\in\PP_\rho\lambda$.  Since $|x|<\rho$, choose
$\xi\in\rho\setminus x$.  Then
\[
   b\restriction x=c_\xi\restriction x\in L^{\kappa}_{\mathcal B}.
\]
Thus the trace is all of $\PP_\rho\lambda$, although
$b\notin\mathcal B$.
\end{proof}

Consequently, small-cover rigidity alone distinguishes covers of size
strictly below $\rho$ from covers of size $\rho$.  For $\rho=\omega_1$, it
rules out countable covers but does not by itself determine whether an
$\omega_1$-sized cover exists.

\begin{proposition}\label{prop:designated-cover}
Let $\alpha$ be an uncountable regular cardinal and let
$G\subseteq\Sstar(\alpha)$ be generic.  Then the family of cofinal
branches
\[
   \mathcal B_\alpha=\{B_\alpha(u):u\in T_\alpha\}
\]
generates $L_\alpha$.
\end{proposition}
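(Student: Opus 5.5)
We need to show that every $y\in L_\alpha$ is extended by some designated branch $B_\alpha(u)$. By \cref{lem:designated-branches}, each $B_\alpha(u)$ is already a cofinal branch through $T_\alpha$, so only the generation property needs proof.

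Fix $y\in L_\alpha$. Then there are a condition $p\in G$ and an ordinal $\xi\in\domn(\ell_p)$ with $y\in\ell_p(\xi)$. Since $\ell_p(\xi)$ is nonempty, clause~(2) of \cref{def:typed-forcing} forces $\cf(\xi)>\omega$. So there is some $x\in(t_p)_\xi$ and a club $E_x$ with $\ell_p(\xi)=\{x\}\cup\{x\restriction z:z\in E_x\}$. In every case $y\subseteq x$: either $y=x$, or $y=x\restriction z$ for some $z\in E_x$. It therefore suffices to find a designated branch extending $x$.

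The natural candidate is $u=x$ itself. We have $x\in t_p\subseteq T_\alpha$, and clause~(3) gives $x\subseteq b_p(x)$. For every $q\in G$ with $x\in t_q$, compatibility inside the generic filter yields a common extension $r$. Order clause~(c) then gives $b_p(x)\subseteq b_r(x)$ and $b_q(x)\subseteq b_r(x)$. Hence all the values $b_q(x)$ are pairwise compatible, and their union $B_\alpha(x)$ extends $b_p(x)\supseteq x\supseteq y$. Thus $y\subseteq B_\alpha(x)\in\mathcal B_\alpha$, which is the required generation.

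The only delicate point is the bookkeeping in passing from $y\in L_\alpha=\bigcup\rng(\ell_G)$ to a single condition $p\in G$ that witnesses both the ladder coordinate and the node $x$ at level $\xi$. This is immediate from the definition of $\ell_G$ as the union of the $\ell_p$, which agree by order clause~(b). One also uses that the empty value at countable cofinality contributes no members to $L_\alpha$.
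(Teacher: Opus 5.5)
Your proposal is correct and matches the paper's proof. You take $p\in G$ and $\xi$ with $y\in\ell_p(\xi)$, note that the nonempty value forces $\cf(\xi)>\omega$, obtain $y\subseteq x$ for the seed node $x\in(t_p)_\xi$, and conclude that $B_\alpha(x)$ extends $y$. You also spell out the step $y\subseteq x\subseteq b_p(x)\subseteq B_\alpha(x)$ via clause~(3) and the union defining $B_\alpha(x)$, which the paper leaves implicit.
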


\begin{proof}
By \cref{lem:designated-branches}, the displayed functions are cofinal
branches through $T_\alpha$.  Fix $y\in L_\alpha$, and choose $p\in G$ and
$\xi\in\domn(\ell_p)$ with
$y\in\ell_p(\xi)$.  Values at coordinates of cofinality at most
$\omega$ are empty.  At a coordinate of uncountable cofinality, the definition
of $\Sstar(\alpha)$ gives
either $y=x$ or $y=x\restriction z$ for some
$x\in(t_p)_\xi$.  In both cases the designated branch $B_\alpha(x)$
extends $y$.  Hence $\mathcal B_\alpha$ generates $L_\alpha$.
\end{proof}

\section{Bistationary Traces, Wide Levels, and Branch-Cover Rigidity}

\subsection{Fixed-seed ladder insertion}\label{sec:insertion}

The limit construction in the proof of Claim~5.7
\cite[p.~1124]{HayutMagidor2022} specifies the tree
and $b$-coordinates.  It admits a fixed-seed refinement in which a prescribed
old node is extended coherently to a distinguished new top node.

\begin{lemma}\label{lem:targeted-insertion}
Let $\alpha\geq\omega_2$ be regular,
$p\in\Sstar(\alpha)$, $u\in t_p$, and $\theta<\alpha$.  There exist
$q\leq p$, an ordinal $\theta<\delta<\alpha$ with
$\cf(\delta)=\omega_1$, a node
\[
   x=b_q(u)\in(t_q)_\delta
\]
extending $u$, and a club $E_x\subseteq\PP_{\omega_1}\delta$ such that
\[
  \delta\in\domn(\ell_q)
  \quad\text{and}\quad
  \ell_q(\delta)=\{x\}\cup
       \{x\restriction z:z\in E_x\}.
\]
\end{lemma}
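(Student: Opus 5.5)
We build a descending sequence $\langle p_\nu:\nu\leq\omega_1\rangle$ below $p$ by imitating Even's strategy from \cref{thm:closure} for a run of length $\omega_1$. The final condition $q=p_{\omega_1}$ is the limit move at a stage of cofinality $\omega_1$. The one modification is that at the limit we insist the distinguished top node is the coherent limit along the $b$-value of $u$.

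\emph{Initial move.} First, by the height-extension operation of \cref{lem:designated-branches}, we may assume $\gamma_p>\theta$. Set $p_0=p$. At successor stages $\nu+1$ we apply the prepared Even successor move. We end-extend $p_\nu$ to a fresh top height $c_{\nu+1}>\gamma_{p_\nu}$ of cofinality $\omega$, extending every $b$-value along a chain. We put $\ell(c_{\nu+1})=\varnothing$ and add a dummy value $-1$ at a countable set $d_{c_{\nu+1}}\subseteq c_{\nu+1}$ cofinal in $c_{\nu+1}$. As checked in \cref{thm:closure}, this move is legal: the supremum is fresh, the nontrivial support is unchanged, and clause~(d) holds. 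At countable limit stages $\nu<\omega_1$ we use the $\sigma$-closure construction with top height $c_\nu=\sup_{\mu<\nu}\gamma_{p_\mu}$. It puts $\ell(c_\nu)=\varnothing$ and a dummy at a countable cofinal subset of $c_\nu$. This preserves the protection invariant, namely that each top height $c_\nu$ has cofinality $\omega$ and carries a reserved dummy.

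\emph{The limit stage $\omega_1$.} Let $\delta=\sup_{\nu<\omega_1}\gamma_{p_\nu}$. Since $\alpha\geq\omega_2$ is regular, $\delta<\alpha$. The heights are strictly increasing and continuous, so $\cf(\delta)=\omega_1$. We form $t_q$ exactly as in the uncountable-cofinality limit of \cref{thm:closure}: it is the union of the trees together with the coherent limits $B(v)=\bigcup_\nu b_{p_\nu}(v)$ as a new top level. We set $b_q(v)=B(v)$, and $\ell_q$, $f_q$ are the unions. Then we choose the top node to be $x=B(u)$. It lies in $(t_q)_\delta$ and extends $u$, because $u\subseteq b_p(u)\subseteq B(u)$ by clause~(3) and the ordering clause~(c); moreover $b_q(u)=x$. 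Let $C=\{c_\nu:0<\nu<\omega_1\}$ be the set of top heights. It is club in $\delta$, and we put $E_x=\{z\in\PP_{\omega_1}\delta:\sup z\in C\}$, which is club in $\PP_{\omega_1}\delta$. Finally we set $\ell_q(\delta)=\{x\}\cup\{x\restriction z:z\in E_x\}$.

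\emph{Verification.} The remaining clauses are checked as in \cref{thm:closure}.
\begin{enumerate}[label=\textup{(\roman*)},leftmargin=2.2em]
  \item The $\ell$-domain is closed, since $\delta$ is added as a new limit point.
  \item The nowhere-stationary clause holds because $C$ avoids the nontrivial support, and at stages below $\delta$ the support stabilizes.
  \item For disjointness at $\delta$, suppose $x\restriction z=f_q(w)$ with $z\in E_x$. Equality of domains gives $w=z$. Then $\sup w\in C$, so uniqueness of suprema forces $w=d_{\sup z}$, which carries the value $-1$. This is a contradiction. The node $x$ itself has domain $\delta$, which is uncountable, so it is not an $f$-value.
  \item The order clauses (a)--(d) hold because all new material lies above every earlier height.
\end{enumerate}

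The main obstacle is step~(iii), together with the nonstationarity of the support at $\delta$. Both rest on the fact that every supremum in $C$ is reserved by a dummy before any later $f$-domain could use it, and this is guaranteed by clause~(d). Choosing $x=B(u)$ instead of an arbitrary top node changes none of these checks.
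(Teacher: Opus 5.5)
Your proposal is correct and follows essentially the same construction as the paper's proof. Both proofs build an $\omega_1$-length descending sequence whose top heights all have cofinality $\omega$, each carrying an empty $\ell$-value and a reserved dummy; both take the fixed-seed limit $x=B(u)=b_q(u)$, let $E_x$ be the countable sets whose suprema lie among those top heights, and verify disjointness through uniqueness of domain suprema. The only cosmetic difference is at the start: you raise the height with the basic height extension and exclude $c_0$ from $C$, whereas the paper's $p_0$ already has a protected top $\gamma_0$ that is included.
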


\begin{proof}
Apply the limit construction from the proof of Claim~5.7
\cite[p.~1124]{HayutMagidor2022}, keeping the seed
$u$ fixed.  First strengthen $p$ to a condition $p_0$ whose tree has fresh top
height $\gamma_0>\max\{\theta,\gamma_p\}$ of cofinality $\omega$.  Put $\gamma_0$ into
$\domn(\ell_{p_0})$ with empty value, choose
$z_0\in\PP_{\omega_1}\gamma_0$ cofinal in $\gamma_0$, and set
$f_{p_0}(z_0)=-1$.

Construct a descending sequence
\[
  \langle p_\xi=\langle t_\xi,\ell_\xi,b_\xi,f_\xi\rangle:
          \xi<\omega_1\rangle.
\]
Write $\gamma_\xi+1$ for the height of $t_\xi$.  Arrange that
$\langle\gamma_\xi:\xi<\omega_1\rangle$ is continuous and strictly
increasing, every $\gamma_\xi$ has cofinality $\omega$, and
\[
   \max\domn(\ell_\xi)=\gamma_\xi,
   \qquad
   \ell_\xi(\gamma_\xi)=\varnothing.
\]
The fixed-seed invariant is
\[
   u_\xi=b_\xi(u)\in(t_\xi)_{\gamma_\xi}.
\]

At a successor stage, end-extend the tree to a fresh top height of
cofinality $\omega$.  Extend every old $b$-value to that top and choose
$u_{\xi+1}=b_{\xi+1}(u)$ extending $u_\xi$.  Put the new top height into the
domain of $\ell$ with empty value, choose a countable cofinal set
$z_{\xi+1}$ at that height, and set $f_{\xi+1}(z_{\xi+1})=-1$.  Above every
old top node, choose enough continuations so that every new node has a top
extension, and use those top extensions to define the remaining new
$b$-values.

At a nonzero countable limit $\lambda<\omega_1$, choose an increasing
cofinal sequence $\langle\xi_n:n<\omega\rangle$ in $\lambda$ and use the
countable-limit construction from \cref{thm:closure} for
$\langle p_{\xi_n}:n<\omega\rangle$, taking its lower bound as $p_\lambda$.  Since the condition sequence is
descending, $p_\lambda$ lies below every $p_\xi$ for $\xi<\lambda$.  If a
node $v$ has appeared by stage $\xi_*(v)$, its new top extension is
\[
   B_\lambda(v)=
      \bigcup_{\xi_*(v)\leq\xi<\lambda}b_\xi(v),
   \qquad
   b_\lambda(v)=B_\lambda(v).
\]
Since $u$ is present from stage $0$,
\[
   u_\lambda=b_\lambda(u)=\bigcup_{\xi<\lambda}u_\xi.
\]
Thus the fixed-seed union is one of the top nodes of the limit condition.
The countable-limit construction also puts the new top height into the domain
of $\ell$ with empty value and adds a dummy value whose domain, denoted by
$z_\lambda$, has that supremum.

Let
\[
   \delta=\sup_{\xi<\omega_1}\gamma_\xi.
\]
Since $\alpha$ is regular and $\alpha\geq\omega_2$,
$\cf(\alpha)=\alpha>\omega_1$, so $\delta<\alpha$.  Since the height
sequence is strictly increasing and $\omega_1$ is regular,
$\cf(\delta)=\omega_1$.  Put
\[
  \widetilde t=\bigcup_{\xi<\omega_1}t_\xi,
  \qquad
  B(v)=\bigcup_{\xi_*<\xi<\omega_1}b_\xi(v),
\]
where $\xi_*$ is any stage after $v$ first appears.  Define
\[
   t_q=\widetilde t\cup\{B(v):v\in\widetilde t\},
   \qquad
   x=B(u)=\bigcup_{\xi<\omega_1}u_\xi.
\]
The invariant shows that $x\in(t_q)_\delta$, extends $u$, and equals
$b_q(u)$.

Set
\[
   E_x=\{z\in\PP_{\omega_1}\delta:
             \sup z\in\{\gamma_\xi:\xi<\omega_1\}\}.
\]
This set is unbounded.  Given $a\in\PP_{\omega_1}\delta$, choose $\xi$ with
$\sup a<\gamma_\xi$ and adjoin to $a$ a countable cofinal subset of
$\gamma_\xi$.  It is closed under increasing unions of positive length below
$\omega_1$ by continuity of the height sequence.  Hence $E_x$ is club.

Let
\[
  \ell_q=\bigcup_{\xi<\omega_1}\ell_\xi
       \cup\{\langle\delta,
           \{x\}\cup\{x\restriction z:z\in E_x\}\rangle\},
  \qquad
  f_q=\bigcup_{\xi<\omega_1}f_\xi.
\]
Define $b_q(v)=B(v)$ for $v\in\widetilde t$ and $b_q(w)=w$ for every
new top node $w$.  The tree is normal, the $\ell$-domain is closed, and every
old $b$-value is extended.  All $f$-values introduced during the recursion are
dummies, so the nontrivial support remains bounded below $\delta$ and is
nowhere stationary.

It remains to verify $\rng(f_q)\cap\ell_q(\delta)=\varnothing$.  The top node
$x$ is not in the range because every non-dummy value $f_q(w)$ has the
countable domain $w$, whereas $x$ has domain $\delta$.  Suppose that
$x\restriction z=f_q(w)$ for some $z\in E_x$ and
$w\in\domn(f_q)$.  Equality of domains gives $z=w$.  Choose $\xi$ with
$\sup z=\gamma_\xi$.  The dummy domain $z_\xi$ also has supremum
$\gamma_\xi$.  Because $w,z_\xi\in\domn(f_q)$ have the same supremum,
uniqueness of domain suprema gives $w=z_\xi$.  Hence $f_q(w)=-1$, contradicting
$f_q(w)=x\restriction z$.  The old ladder-coordinate values remain disjoint from the
range because all $f$-values introduced during the recursion are dummies.  Thus $q$ is
the required condition.
\end{proof}

The fixed seed is needed for this construction.  The conclusion does not
follow from the mere existence of an increasing sequence of nodes above $u$.  At a countable limit, the construction in the proof of Claim~5.7
\cite[p.~1124]{HayutMagidor2022} adds top nodes that
are unions of coherent $b$-approximations associated
with a single previously existing node.  The displayed identity
\[
   b_\lambda(u)=\bigcup_{\xi<\lambda}b_\xi(u)
\]
therefore shows that $b_\lambda(u)$ belongs to the new top level.  Without the fixed seed,
an arbitrary union may fail to be one of the nodes inserted by the limit
condition.  This is also why the uniform diagonal argument first decides the branch
names and only then chooses the seed to follow.

An anchored refinement places one prescribed countable set among the
restrictions at the new ladder level.

\begin{lemma}\label{lem:anchored-insertion}
Let $\alpha\geq\omega_2$ be regular, let $p\in\Sstar(\alpha)$, and let
$u\in t_p$.  Suppose that $a\in\domn(f_p)$ and $f_p(a)=-1$.  There exist
$q\leq p$, an ordinal $\delta<\alpha$ of cofinality $\omega_1$, a node
$x=b_q(u)\in(t_q)_\delta$, and a club
$E_x\subseteq\PP_{\omega_1}\delta$ such that
\[
  a\in E_x
  \quad\text{and}\quad
  \ell_q(\delta)=\{x\}\cup
       \{x\restriction z:z\in E_x\}.
\]
\end{lemma}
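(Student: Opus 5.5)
The plan is to rerun the fixed-seed construction from the proof of \cref{lem:targeted-insertion} with $\theta=\gamma_p$, and to change only the club at the end. That construction gives a descending sequence $\langle p_\xi:\xi<\omega_1\rangle$ below $p$ with continuous, strictly increasing heights $\gamma_\xi>\gamma_p$ of cofinality $\omega$, reserved dummy domains $z_\xi$ with $\sup z_\xi=\gamma_\xi$, and the fixed-seed invariant $b_\xi(u)=u_\xi$. The limit condition $q$ has top height $\delta=\sup_\xi\gamma_\xi$ with $\cf(\delta)=\omega_1$, and $x=B(u)=b_q(u)\in(t_q)_\delta$. Since $a\in\domn(f_p)\subseteq\PP_{\omega_1}\gamma_p$, we have $\sup a\leq\gamma_p<\gamma_0$. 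The club used there was
\[
   E^0=\{z\in\PP_{\omega_1}\delta:\sup z\in\{\gamma_\xi:\xi<\omega_1\}\},
\]
and I would instead take
\[
   E_x=E^0\cup\{a\}.
\]
The value $\ell_q(\delta)=\{x\}\cup\{x\restriction z:z\in E_x\}$ is then defined exactly as before. All tree, $b$-, and $f$-clauses, as well as the order clauses, are verified verbatim as in \cref{lem:targeted-insertion}, because only $\ell_q(\delta)$ has changed. In particular, $f_q$ is unchanged and still contains $f_p(a)=-1$.

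\emph{Club check.} Unboundedness is inherited from $E^0$. For closure, take an increasing sequence of positive countable length in $E_x$. If every term equals $a$, the union is $a$. Otherwise, some term $z$ lies in $E^0$, and then $\sup z\geq\gamma_0>\sup a$. Hence $a$ cannot occur after $z$, since it is not a superset of $z$. So from that point on, all terms lie in $E^0$. The union of the sequence equals the union of this tail, which lies in $E^0$ by the closure of $E^0$ (continuity of the heights). Note that if $a$ occurs at all, it occurs first, and then $a\subseteq$ the union; this does not affect the argument.

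\emph{Disjointness check.} This is the one new requirement, and I expect it to be the only real point. The top node $x$ has domain $\delta$, so it is not an $f$-value. Suppose $x\restriction z=f_q(w)$ with $z\in E_x$. Equality of domains forces $w=z$. If $z\in E^0$, the argument of \cref{lem:targeted-insertion} applies: uniqueness of suprema gives $w=z_\xi$, and $z_\xi$ carries a dummy. If $z=a$, then $f_q(a)=f_p(a)=-1$, which is not a binary function, so it cannot equal $x\restriction a$. This is precisely where the hypothesis that $a$ already carries the dummy is used.

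Disjointness of $\rng(f_q)$ from the old values $\ell_p(\xi)$ is inherited, since the recursion adds only dummies. Hence $q\leq p$, $a\in E_x$, and $\ell_q(\delta)$ has the required form.
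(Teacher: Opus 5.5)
Your proposal is correct and follows essentially the same route as the paper: run the fixed-seed construction of \cref{lem:targeted-insertion} with $\theta=\gamma_p$, enlarge the club to $E^0\cup\{a\}$, and note that the only new restriction $x\restriction a$ cannot be an $f$-value because $f(a)=-1$. The one minor difference is in the closure check: the paper cites the lemma as a black box and argues for an arbitrary club $E^0_x$ (either cofinally many terms lie in $E^0_x$, or the sequence is eventually constant at $a$), whereas you reopen the construction and use $\sup a<\gamma_0$ to show that $a$ can only occur before every $E^0$-term of an increasing sequence.
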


\begin{proof}
Apply \cref{lem:targeted-insertion} with $\theta=\gamma_p$, obtaining
$q^0\leq p$, $\delta$, $x$, and $E_x^0$.  Then $f_{q^0}(a)=-1$ and
$a\subseteq\gamma_p<\delta$, so $a\in\PP_{\omega_1}\delta$.  Put
$E_x=E_x^0\cup\{a\}$.  Then $E_x$ is still club.  For successor-length sequences,
closure is immediate.  For a limit-length sequence, either cofinally many terms
belong to $E_x^0$ or the sequence is eventually constant at $a$.

Let $q$ agree with $q^0$ except that
$\ell_q(\delta)=\{x\}\cup\{x\restriction z:z\in E_x\}$.  The only possible new
node is $x\restriction a$.  If $x\restriction a=f_q(w)$, equality of domains
gives $w=a$, contradicting $f_q(a)=-1$.  Hence $q$ is a condition, and
$\delta>\gamma_p$ gives $q\leq p$.
\end{proof}

\subsection{Bistationary traces}\label{sec:traces}

A countable set in a named club can be chosen with its supremum reserved by a
dummy value.

\begin{lemma}\label{lem:club-point}
Let $\alpha$ be an uncountable regular cardinal, let $p\in\Sstar(\alpha)$, and suppose
that $p\forces\dot C$ is club in $\PP_{\omega_1}\alpha$.  There exist
$s\leq p$ and $a\in\PP_{\omega_1}\alpha$ such that, with
$\delta=\sup a$,
\[
   s\forces \check a\in\dot C,
   \qquad
   \ell_s(\delta)=\varnothing,
   \qquad
   f_s(a)=-1.
\]
The top height of $t_s$ may be taken to be $\delta$.
\end{lemma}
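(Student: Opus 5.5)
We will build an $\omega$-length descending sequence of conditions, interleaved with countable approximations to a point of $\dot C$, and then take the countable-limit lower bound from the proof of \cref{thm:closure}. The point $a$ will be the union of these approximations, chosen cofinal in the limit height $\delta$. The lower bound construction then places the dummy exactly at $a$, puts $\ell(\delta)=\varnothing$, and makes the top height $\delta$.

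\emph{Step 1: the recursion.} Set $p_0=p$ and $a_0=\varnothing$. Given $p_n$ and a countable set $a_n$, use $\sigma$-closure and the fact that no new countable sets of ordinals are added (\cref{thm:closure}) to find:
\begin{itemize}
\item a condition $p_{n+1}\leq p_n$;
\item a ground-model countable set $a_{n+1}\supseteq a_n\cup\{\gamma_{p_n}\}$;
\end{itemize}
such that $p_{n+1}\forces \check a_{n+1}\in\dot C$.

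This is possible because $p_n$ forces $\dot C$ to be unbounded. Hence some name $\dot e$ is forced to be a member of $\dot C$ containing $\check a_n\cup\{\gamma_{p_n}\}$. A single extension decides $\dot e$, since $\dot e$ is forced to be a countable set of ordinals and, by distributivity, the set of conditions deciding it is dense.

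We also arrange $\gamma_{p_{n+1}}>\sup a_{n+1}$, strengthening by the height-extension argument of \cref{lem:designated-branches} if needed, with $\ell$ and $f$ left unchanged. This interleaving makes $\delta=\sup_n\gamma_{p_n}=\sup_n\sup a_n$ and gives $\cf(\delta)=\omega$.

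\emph{Step 2: the limit.} Let $a=\bigcup_n a_n$. It is countable and cofinal in $\delta$, because it contains every $\gamma_{p_n}$. The sequence $\langle a_n\rangle$ is increasing with union $a$, so closure of $\dot C$ under increasing unions of length $\omega$ gives that every common lower bound of the $p_n$ forces $\check a\in\dot C$.

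Now apply the $\sigma$-closure construction from the proof of \cref{thm:closure}, using its stated option that a prescribed $a\in\PP_{\omega_1}\delta$ cofinal in $\delta$ may serve as the dummy domain. The new top height is $\delta$, and freshness holds because every old $f$-domain has supremum below some $\gamma_{p_n}<\delta$.

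\emph{Step 3: forcing $\delta$ into the $\ell$-domain.} The construction puts $\ell(\delta)=\varnothing$ only when $\delta$ is a limit point of the $\ell$-domains, so this must be guaranteed. At each successor step we additionally extend $p_{n+1}$ to a fresh top height of cofinality $\omega$, add it to the $\ell$-domain with empty value, and reserve a dummy there. This is exactly Even's prepared successor move from \cref{thm:closure}, and it is legal. The $\ell$-domains are then cofinal in $\delta$, so the limit construction yields $\ell_s(\delta)=\varnothing$ and $f_s(a)=-1$, with top height $\delta$.

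I expect the main obstacle to be this bookkeeping in Step 3: making sure that $\delta$ genuinely enters $\domn(\ell_s)$, that the $\ell$-domain stays closed, and that $a$ is legal as the dummy, i.e. $a\subseteq\delta$ and $\sup a$ distinct from all other domain suprema. Everything else is routine fusion.
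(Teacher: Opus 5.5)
Your proof is correct. It ends exactly as the paper's does: fresh $\omega$-cofinal heights with empty $\ell$-values and reserved dummies are made cofinal, and then the countable-limit lower bound of \cref{thm:closure} is taken with the prescribed dummy domain $a$. The difference is in how you produce $a$.

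The paper fixes a countable $M\prec H(\chi)$ in advance, sets $a=M\cap\alpha$, and enumerates $a$ as $\{\eta_n:n<\omega\}$. It uses elementarity to keep every condition, every decided set $y_n$, and every height $\gamma_n$ inside $M$. Then $y_n\subseteq M\cap\alpha$ together with $\eta_n\in y_n$ gives $\bigcup_n y_n=a$. The heights $\gamma_n\in M\cap\alpha$, chosen above $\eta_0,\dots,\eta_n$, are cofinal in $\delta=\sup a$.

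You instead define $a$ as the union of the decided sets. You obtain $\sup a=\delta=\sup_n\gamma_{p_n}$ from the two-sided interleaving $\gamma_{p_n}\in a_{n+1}\subseteq\gamma_{p_{n+1}}$. This also makes the sequence $\langle a_n\rangle$ strictly increasing, so closure of $\dot C$ applies directly.

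Your route is more elementary: it needs no ambient $H(\chi)$ and no check that the choices can be made inside $M$. The paper's route fixes a canonical $a$ before the recursion and sets up the template it reuses in \cref{prop:no-club}.

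A minor remark on your Step 3: it is slightly more than necessary. Since $\cf(\delta)=\omega$, you could also adjoin $\delta$ to the $\ell$-domain as an isolated point with empty value. Making the $\ell$-domains cofinal, as the paper also does, simply matches the closure construction as stated.
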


\begin{proof}
Choose a sufficiently large regular cardinal $\chi$ and a countable
$M\prec H(\chi)$ containing $p,\dot C,\Sstar(\alpha)$, and $\alpha$.  Put $a=M\cap\alpha$ and
$\delta=\sup a$.  The set $a$ has no
maximum.  If $\beta\in M\cap\alpha$, then
$\beta+1\in M\cap\alpha$ by elementarity.  Hence
$a\subseteq\delta$ and $\cf(\delta)=\omega$.  Enumerate
$a=\{\eta_n:n<\omega\}$.

By \cref{thm:closure}, every name forced to belong to
$\PP_{\omega_1}\alpha$ can be decided as a ground-model countable set.  First
strengthen the condition to decide whether the name is empty.  In the nonempty
case, use the maximum principle to choose a name for a surjection from $\omega$
onto the named set, decide its values coordinate by coordinate, and take a
common lower bound.  Using elementarity at each finite stage, construct a
descending sequence $\langle p_n:n<\omega\rangle$ with $p_n\in M$ and
$p_0=p$, and an increasing sequence $\langle y_n:n<\omega\rangle$ in
$M\cap\PP_{\omega_1}\alpha$.  Choose $q_n\in M$ and $y_n$ such that
$q_n\leq p_n$ and
\[
  q_n\forces \check y_n\in\dot C
  \quad\text{and}\quad
  y_{n-1}\cup\{\eta_0,\ldots,\eta_n\}\subseteq y_n,
\]
where $y_{-1}=\varnothing$.  End-extend $q_n$ to a condition $p_{n+1}$ with
fresh top height $\gamma_n\in M\cap\alpha$ of cofinality $\omega$, chosen above
$\eta_0,\ldots,\eta_n$ and, when $n>0$, above $\gamma_{n-1}$.  Then
$\langle\gamma_n:n<\omega\rangle$ is increasing and cofinal in $\delta$.
Put
$\ell_{p_{n+1}}(\gamma_n)=\varnothing$, choose
$z_n\in M\cap\PP_{\omega_1}\gamma_n$ cofinal in $\gamma_n$, and set
$f_{p_{n+1}}(z_n)=-1$.  Extend every old $b$-value to the new top.  Freshness
of $\gamma_n$ preserves the uniqueness of domain suprema and all old
$f$--$\ell$ clauses.

Since $y_n\in M$ is countable, $y_n\subseteq M\cap\alpha=a$.  The displayed
inclusions give $\bigcup_n y_n=a$.  Use the countable-limit construction from
\cref{thm:closure} with top $\delta$, $\ell_s(\delta)=\varnothing$,
and dummy $f_s(a)=-1$.  No earlier member of $\domn(f_s)$ has supremum
$\delta$.

The condition $s$ forces $y_n\in\dot C$ for every $n$.  Since $\dot C$ is
closed under increasing $\omega$-unions and $a=\bigcup_{n<\omega}y_n$ in the
ground model,
\[
   s\forces \check a\in\dot C.
\]
\end{proof}

The club point supplied by the lemma can be combined with countably many
anchored insertions to handle any countable collection of designated branches.

\Needspace{12\baselineskip}
\begin{theorem}\label{thm:stationary-traces}
Let $\alpha\geq\omega_2$ be regular and let
$G\subseteq\Sstar(\alpha)$ be generic.  For every nonempty countable
$U\subseteq T_\alpha$,
\[
   \bigcap_{u\in U}
   \Tr_{\omega_1}(B_\alpha(u),L_\alpha)
\]
is stationary in $\PP_{\omega_1}\alpha$.
\end{theorem}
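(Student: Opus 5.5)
We argue by density, following the outline after \cref{thm:main}.  Fix in $V$ a condition $p$, a name $\dot U$ for a nonempty countable subset of $T_\alpha$, and a name $\dot C$ forced by $p$ to be club in $\PP_{\omega_1}\alpha$.  We find $r\leq p$ and a ground-model $a\in\PP_{\omega_1}\alpha$ with
\[
   r\forces \check a\in\dot C\cap\bigcap_{u\in\dot U}\Tr_{\omega_1}(B_\alpha(u),L_\alpha).
\]
By \cref{thm:closure} no new countable sets of ordinals are added.  Hence we may first strengthen $p$ to a condition deciding $\dot U$ as a ground-model countable set $U=\{u_n:n<\omega\}$, listed with repetitions if $U$ is finite.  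Extending further by the height-extension operation of \cref{lem:designated-branches}, and using $\sigma$-closure, we may assume $U\subseteq t_p$.  This is legitimate because $U$ is a countable set of nodes below a fixed height.

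Next apply \cref{lem:club-point} below this condition.  It gives $s\leq p$ and $a$ with $s\forces\check a\in\dot C$, $f_s(a)=-1$, and $\sup a=\delta_{-1}$ equal to the top height of $t_s$.  The $f$-ordering is by extension, so the dummy value $f(a)=-1$ persists in every extension of $s$.  Now build a descending sequence $s=q_{-1}\geq q_0\geq q_1\geq\cdots$.  At stage $n$, apply \cref{lem:anchored-insertion} to $q_{n-1}$, the seed $u_n$, and the retained dummy domain $a$.  This gives $q_n\leq q_{n-1}$, an ordinal $\delta_n$ of cofinality $\omega_1$, a node $x_n=b_{q_n}(u_n)\in(t_{q_n})_{\delta_n}$, and a club $E_{x_n}\ni a$ with $x_n\restriction a\in\ell_{q_n}(\delta_n)$.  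The hypotheses of \cref{lem:anchored-insertion} hold at every stage: $u_n\in t_p\subseteq t_{q_{n-1}}$, $a\in\domn(f_{q_{n-1}})$, and $f_{q_{n-1}}(a)=-1$.

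By $\sigma$-closure of \cref{thm:closure}, let $r$ be a common lower bound of the $q_n$.  Then for every $n$ we have $\ell_r(\delta_n)=\ell_{q_n}(\delta_n)\ni x_n\restriction a$, so $r\forces x_n\restriction a\in L_\alpha$.  Also $r\in G$ forces $b_{q_n}(u_n)\subseteq B_\alpha(u_n)$, so $B_\alpha(u_n)\restriction a=x_n\restriction a$, which requires $a\subseteq\delta_n$.  This inclusion holds because $\sup a=\delta_{-1}$ is at most the top height of $s$, which lies below every $\delta_n$.  Hence
\[
   r\forces\check a\in\dot C\cap\bigcap_{u\in U}\Tr_{\omega_1}(B_\alpha(u),L_\alpha),
\]
and density gives the required stationarity.

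The main obstacle is keeping one fixed $a$ usable across all countably many insertions.  Each insertion must add $x_n\restriction a$ to a ladder coordinate without violating the disjointness clause~(5), and no later step may assign a non-dummy value at $a$.  This is why the dummy $f(a)=-1$ is placed first and then retained.  It blocks non-dummy values at domain $a$, and by uniqueness of domain suprema it blocks any value at another set with supremum $\sup a$.  Clause~(d) of the ordering already prevents later domains with that supremum.  The proof of \cref{lem:anchored-insertion} checks exactly this.  The argument also needs $\sigma$-closure to produce a legal limit $r$ in which all ladder values are retained, and end extension of $\ell$ guarantees this.
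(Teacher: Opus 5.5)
Your proposal is correct and follows the paper's proof essentially step by step: you decide the countably many nodes and raise the height so they lie in the condition's tree, use \cref{lem:club-point} to obtain a reserved dummy domain $a$ forced into $\dot C$, apply \cref{lem:anchored-insertion} once for each seed while retaining the same $a$, and finish with a $\sigma$-closure lower bound. The only difference is cosmetic: you decide $\dot U$ outright as a ground-model countable set, whereas the paper works with an $\omega$-enumeration of names and decides each name.
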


\begin{proof}
It is enough to prove the corresponding forcing statement for a countable
sequence of nodes.  Indeed, by the maximum principle, every nonempty countable
set of nodes in an extension has an $\omega$-enumeration name, with repetitions
when the set is finite.  Let $\langle\dot u_n:n<\omega\rangle$ be a sequence of
$\Sstar(\alpha)$-names, let $p\in\Sstar(\alpha)$ force that
$\dot u_n\in T_\alpha$ for every $n<\omega$ and that $\dot C$ is club in
$\PP_{\omega_1}\alpha$, and fix $r\leq p$.

Recursively strengthen the condition to decide each name
$\dot u_n=\check u_n$ and then raise the tree height above $\domn(u_n)$.
Since the strengthened condition still forces $u_n\in T_\alpha$ and later end
extensions cannot alter that level, the node $u_n$ already belongs to the tree
of that condition.  By \cref{thm:closure}, there exist
$r^*\leq r$ and nodes $u_n\in t_{r^*}$ such that
\[
   r^*\forces \dot u_n=\check u_n
   \qquad(n<\omega).
\]
Apply \cref{lem:club-point} below $r^*$ to obtain $s_0\leq r^*$ and
$a\in\PP_{\omega_1}\alpha$ such that
\[
   s_0\forces \check a\in\dot C,
   \qquad
   f_{s_0}(a)=-1.
\]

Construct a descending sequence $\langle s_n:n<\omega\rangle$.  Given
$s_n$, apply \cref{lem:anchored-insertion} below $s_n$ with seed $u_n$ and
obtain $s_{n+1}\leq s_n$.  Put
\[
   x_n=b_{s_{n+1}}(u_n).
\]
The anchored insertion preserves the dummy value at $a$ and ensures that
$x_n\restriction a$ belongs to a ladder-coordinate value of $s_{n+1}$.  Use
\cref{thm:closure} once more to obtain a common lower bound $q$ for the
sequence.  Every later $b$-value at $u_n$ extends $x_n$, and every ladder-coordinate
value already inserted is preserved by end extension.  Hence
\[
   q\forces
   B_\alpha(u_n)\restriction a=x_n\restriction a\in L_\alpha
   \qquad(n<\omega).
\]
Since $q\leq s_0$, it also forces $a\in\dot C$.  Thus every condition below
$p$ has an extension forcing that the named club meets all the designated
traces simultaneously.
\end{proof}

The proof gives the following dense formulation.  Let
$\alpha\geq\omega_2$ be regular, let $p\in\Sstar(\alpha)$, and let
$\langle u_n:n<\omega\rangle$ be a sequence in $t_p$.  Suppose that $p$
forces $\dot C$ to be club in $\PP_{\omega_1}\alpha$.  For every $r\leq p$
there exist
$q\leq r$ and $a\in\PP_{\omega_1}\alpha$ in the ground model such that
\[
  q\forces \check a\in\dot C
  \quad\text{and}\quad
  q\forces B_\alpha(u_n)\restriction a\in L_\alpha
  \qquad(n<\omega).
\]
The same reserved domain $a$ is preserved through all anchored insertions, so
$q$ forces $a$ to belong to every designated trace in the sequence.  No cardinal-arithmetic or large-cardinal hypothesis is used
beyond the stated assumptions on $\alpha$.

The typed analogue of \cite[Claim~5.8, p.~1124]{HayutMagidor2022} replaces
the dummy value in the countable-limit construction by a non-dummy value.

\begin{proposition}\label{prop:no-club}
Let $\alpha\geq\omega_2$ be regular and let
$G\subseteq\Sstar(\alpha)$ be generic.  No cofinal branch through
$T_\alpha$ meets $L_\alpha$ on an $\omega_1$-club.
\end{proposition}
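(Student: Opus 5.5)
The plan is to argue by density, mirroring Claim~5.8 of Hayut--Magidor with a non-dummy $f$-value in place of the reserved dummy. Suppose toward a contradiction that $p$ forces $\dot b$ to be a cofinal branch through $T_\alpha$ and $\dot E$ to be a club in $\PP_{\omega_1}\alpha$ with $\dot b\restriction z\in L_\alpha$ for every $z\in\dot E$. Fix a sufficiently large regular $\chi$ and a countable $M\prec H(\chi)$ containing $p,\dot b,\dot E,\Sstar(\alpha)$. Put $a=M\cap\alpha$ and $\delta=\sup a$; as in \cref{lem:club-point}, $a$ has no maximum and $\cf(\delta)=\omega$. I will build a descending sequence $\langle p_n:n<\omega\rangle$ in $M$ below $p$, together with an increasing sequence $y_n\in M\cap\PP_{\omega_1}\alpha$ whose union is $a$. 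Each $p_{n+1}$ should force $\check y_n\in\dot E$ and decide $\dot b\restriction\gamma_n$ as a ground-model node $v_n\in t_{p_{n+1}}$, where $\gamma_n\in M$ is the top height of $p_{n+1}$ and the $\gamma_n$ are cofinal in $\delta$. Such decisions exist inside $M$ by \cref{thm:closure} and elementarity. The successive steps also raise heights with fresh tops of cofinality $\omega$ carrying empty $\ell$-values, exactly as in \cref{lem:club-point}. The $v_n$ cohere, so $v=\bigcup_n v_n\in{}^\delta 2$.

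At the limit I apply the countable-limit construction of \cref{thm:closure}, but first I check that $v$ is a top node. Since $\dot b$ is a cofinal branch, I will interleave the decision of $\dot b$ with the fixed-seed idea. In the construction I choose $v_n$ so that $b_{p_{n+1}}(v_n)$ extends the next decided values. Alternatively, I add a top node for $v$ directly, which is harmless because the tree clause is unrestricted: the lower bound may contain $v$ at level $\delta$ along with the coherent $b$-limits, with $b(v)=v$. Then $v\restriction a\in t[a]$. The key change is that I set $f_s(a)=v\restriction a$, a non-dummy value, and $\ell_s(\delta)=\varnothing$. I must verify clause~(4). The supremum $\delta$ is fresh because all earlier $f$-domains lie in $M$ and have supremum below $\delta$. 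The nontrivial support gains only the point $\delta$, which has countable cofinality, so nowhere-stationarity is preserved. Clause~(5) needs $v\restriction a\notin\ell_s(\xi)$ for every old $\xi$. This holds because old ladder values at $\xi<\delta$ consist of functions with domain contained in $\xi$, while $a$ is cofinal in $\delta$, and because $\ell_s(\delta)$ is empty.

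Then $s\forces \check a\in\dot E$, since $a=\bigcup y_n$ and $\dot E$ is closed. Also $s\forces\dot b\restriction a=v\restriction a$. So $s$ forces $v\restriction a\in L_\alpha$, meaning some $r\le s$ in $G$ has $v\restriction a\in\ell_r(\xi)$ for some $\xi$. I expect the main obstacle to be showing this is impossible in every extension, namely that clause~(5) is preserved downward for the fixed value $f_s(a)$. Clause~(5) for $r$ gives $\rng(f_r)\cap\ell_r(\xi)=\varnothing$, and $f_r(a)=f_s(a)=v\restriction a$ by clause~(d). This yields a contradiction with $v\restriction a\in\ell_r(\xi)$. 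So the hardest step is really the legality of the limit condition: placing $v$ on the top level while keeping $b$-coherence. I would handle this first through the unrestricted tree clause, and verify carefully that adding the extra top node preserves normality.
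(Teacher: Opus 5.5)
Your proposal is correct and follows the paper's proof essentially step for step. The shared steps are: a countable $M$; a descending sequence in $M$ that decides $\dot b\restriction\gamma_n$ and picks points $y_n\in\dot E$ with $\bigcup_n y_n=a$; the countable-limit lower bound with $v$ added directly to the top level (so the fixed-seed interleaving you float first is unnecessary); the non-dummy value $f(a)=v\restriction a$ together with $\ell(\delta)=\varnothing$; and clause~(5) in every stronger condition to force $v\restriction a\notin L_\alpha$.

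Two small wording fixes. First, take $\gamma_n$ to be the fresh height introduced before the decision, with $p_{n+1}$ of height at least $\gamma_n$, rather than the top height of the deciding condition itself. Second, justify the freshness of $\delta$ by $\sup z\le\gamma_{p_n}<\delta$ for every earlier $f$-domain $z$, rather than by $z\in M$.
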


\begin{proof}
Let $p$ force that $\dot c$ is a cofinal branch through $T_\alpha$ and that
$\dot C$ is a club in $\PP_{\omega_1}\alpha$.  Fix $r\leq p$ and choose a
sufficiently large regular cardinal $\chi$ and a countable
$M\prec H(\chi)$ containing
$r,\dot c,\dot C,\Sstar(\alpha)$, and $\alpha$.  Put
$a=M\cap\alpha$ and $\delta=\sup a$.  If $\beta\in a$, then
$\beta+1\in a$ by elementarity, so $a$ has no maximum.  Hence
$a\subseteq\delta$, $\cf(\delta)=\omega$, and $\widehat a=\delta$.
Enumerate $a=\{\eta_n:n<\omega\}$.

Using elementarity at each finite stage, construct a descending sequence
$\langle p_n:n<\omega\rangle$ with $p_n\in M$ and $p_0=r$, an increasing sequence
$\langle y_n:n<\omega\rangle$ in
$M\cap\PP_{\omega_1}\alpha$, a sequence of ordinals
$\langle\gamma_n:n<\omega\rangle$, and nodes
$v_n\in(t_{p_{n+1}})_{\gamma_n}$.  Given $p_n$, first choose
$r_n\leq p_n$ and $y_n$ such that
\[
  r_n\forces \check y_n\in\dot C,
  \qquad
  y_{n-1}\cup\{\eta_0,\ldots,\eta_n\}\subseteq y_n,
\]
where $y_{-1}=\varnothing$.  End-extend $r_n$ to a condition $s_n$ with a fresh top height
$\gamma_n$ of cofinality $\omega$, chosen above
$\eta_0,\ldots,\eta_n$ and, when $n>0$, above $\gamma_{n-1}$.  Put an empty
ladder-coordinate value there and a dummy $f$-value at a cofinal subset of $\gamma_n$.
The resulting sequence of heights is increasing and cofinal in $\delta$.
Using the $<\alpha$-distributivity from \cref{thm:closure}, strengthen
$s_n$ to $p_{n+1}$ and decide
\[
  p_{n+1}\forces
  \dot c\restriction\gamma_n=\check v_n
\]
for some $v_n\in(t_{p_{n+1}})_{\gamma_n}$.  By elementarity, these choices
can be made in $M$.  The nodes $v_n$ are coherent because the conditions are
descending and the heights are increasing.

Let $y=\bigcup_n v_n\in{}^\delta2$ and $x=y\restriction a$.  Use the
tree, ladder, and $b$-coordinate part of the countable-limit construction from
\cref{thm:closure}, adding $y$ to the new top level alongside the
standard coherent $b$-limits.  Every initial segment of $y$ already belongs
to one of the earlier trees, so the enlarged tree remains normal.  Retain the
standard $b$-values and set $b_q(y)=y$.  Put
$\ell_q(\delta)=\varnothing$.  At the new supremum, use the non-dummy value
\[
   f_q(a)=x
\]
in place of the dummy value used in that construction.
This is a legal non-dummy value because $\widehat a=\delta$ and
$x=y\restriction a\in t_q[a]$.  No earlier $f$-domain has supremum
$\delta$.  End extension stabilizes the old nontrivial support below each
bounded ordinal, and adjoining the single point $\delta$ therefore preserves
nowhere stationarity.  The new value is not a member of any old ladder-coordinate value because its
domain $a$ is cofinal in $\delta$, whereas every node occurring in an old
ladder-coordinate value has domain bounded below $\delta$.  Thus $q$ is a condition below every $p_n$.

Since each $y_n\in M\cap\PP_{\omega_1}\alpha$ is countable,
$y_n\subseteq M\cap\alpha=a$.  The displayed inclusions therefore give
$a=\bigcup_n y_n$.  Closure of $\dot C$ under increasing $\omega$-unions
then gives $q\forces\check a\in\dot C$.  Moreover, for every $\beta\in a$, choose $n$
with $\beta<\gamma_n$.  Since $q\leq p_{n+1}$ and
$p_{n+1}\forces\dot c\restriction\gamma_n=\check v_n$, while
$y\restriction\gamma_n=v_n$, the condition $q$ forces
$\dot c\restriction a=x$.  Since $x\in\rng(f_q)$ and every stronger
condition must keep its ladder-coordinate values disjoint from the range of its
$f$-coordinate, the condition $q$ forces $x\notin L_\alpha$.  Hence the trace of $\dot c$ misses
the member $a$ of the named club.  Since $r\leq p$ was arbitrary, no branch
meets $L_\alpha$ on a club.
\end{proof}

\begin{corollary}\label{cor:bistationary}
Let $\alpha\geq\omega_2$ be regular and let
$G\subseteq\Sstar(\alpha)$ be generic.  Every nonempty countable subfamily of
\[
   \bigl\{\Tr_{\omega_1}(B_\alpha(u),L_\alpha):u\in T_\alpha\bigr\}
\]
has bistationary intersection.  In particular, every designated branch meets
$L_\alpha$ $\omega_1$-cofinally, but no designated branch meets it on an
$\omega_1$-club.
\end{corollary}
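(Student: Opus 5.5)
Stationarity of the intersection is exactly \cref{thm:stationary-traces}, so the work is costationarity plus the two ``in particular'' clauses. Fix a nonempty countable $U\subseteq T_\alpha$ and pick $u_0\in U$. The intersection is contained in $\Tr_{\omega_1}(B_\alpha(u_0),L_\alpha)$, so its complement contains the complement of this single trace. It therefore suffices to show that the complement of one designated trace is stationary. By \cref{lem:designated-branches}, $B_\alpha(u_0)$ is a cofinal branch through $T_\alpha$. \Cref{prop:no-club} then says that its trace contains no club in $\PP_{\omega_1}\alpha$. The complement of a set containing no club meets every club, so it is stationary. Thus the intersection is costationary, and together with \cref{thm:stationary-traces} it is bistationary.

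For the final sentence, apply the first part with $U=\{u\}$. A stationary subset of $\PP_{\omega_1}\alpha$ is cofinal under inclusion. Indeed, for any $a\in\PP_{\omega_1}\alpha$, the cone $\{z:a\subseteq z\}$ is club, and the stationary trace meets it. Hence $B_\alpha(u)$ meets $L_\alpha$ $\omega_1$-cofinally. The failure of club catching is \cref{prop:no-club} applied to the cofinal branch $B_\alpha(u)$.

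The only point needing care is the cone claim: the set $\{z\in\PP_{\omega_1}\alpha:a\subseteq z\}$ must be club under the positive-length closure convention fixed above. It is unbounded because $z\cup a$ extends any $z$ and is still countable. It is closed because an increasing union of positive length of supersets of $a$ is again a superset of $a$. No real obstacle is expected, since all forcing content is already in \cref{thm:stationary-traces,prop:no-club}.
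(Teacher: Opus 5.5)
Your proposal is correct and follows essentially the same route as the paper's proof. Stationarity comes from \cref{thm:stationary-traces}, costationarity from passing to a single member of the family and invoking \cref{prop:no-club}, and cofinality from stationarity. You also spell out two points the paper leaves implicit: that $B_\alpha(u_0)$ is a cofinal branch by \cref{lem:designated-branches}, and that the cone $\{z:a\subseteq z\}$ is club under the positive-length convention.
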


\begin{proof}
Stationarity of every nonempty countable intersection follows from
\cref{thm:stationary-traces}.  Fix such a subfamily and one of its members.
By \cref{prop:no-club}, that trace contains no club.  If the complement of
the intersection were nonstationary, some club would be contained in the
intersection and hence in the chosen member, a contradiction.  Thus the
intersection is also costationary.  Every stationary subset of
$\PP_{\omega_1}\alpha$ is cofinal under inclusion.
\end{proof}

\subsection{Wide levels and branch-cover rigidity}\label{sec:cover-rigidity}

The designated branch cover supplied by \cref{prop:designated-cover} cannot be
replaced by a countable family, even when arbitrary cofinal branches are
permitted.

\begin{lemma}\label{lem:wide-level}
Let $\alpha$ be an uncountable regular cardinal, let $\mu<\alpha$ be an
infinite cardinal in $V$, and let $p\in\Sstar(\alpha)$.  There are
$q\leq p$, a limit ordinal $\delta<\alpha$, and a node
$u\in(t_p)_{\gamma_p}$ such that
\[
   \{u^\frown d:d\in{}^\mu2\}\subseteq(t_q)_\delta.
\]
Consequently, in every $\Sstar(\alpha)$-generic extension, some level of
$T_\alpha$ contains a copy of $({}^\mu2)^V$.
\end{lemma}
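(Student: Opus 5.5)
Fix a top node $u\in(t_p)_{\gamma_p}$ and build $q$ by a single end extension, in the style of the height-extension step of \cref{lem:designated-branches}. This time the tree above $u$ is allowed to branch fully. Let $\delta=\gamma_p+\mu$. This is a limit ordinal, and $\delta<\alpha$ because $\alpha$ is a regular uncountable cardinal above $\mu$ and $\gamma_p<\alpha$. Identify ${}^\mu2$ with functions on the interval $[\gamma_p,\delta)$ via the order isomorphism $\gamma_p+\eta\mapsto\eta$, so that $u^\frown d\in{}^\delta2$ for each $d\in{}^\mu2$. Put
\[
  t_q=t_p\cup\{(u^\frown d)\restriction\beta:d\in{}^\mu2,\ \gamma_p<\beta\leq\delta\}
      \cup\{w^\frown\mathbf 0\restriction\beta:w\in(t_p)_{\gamma_p}\setminus\{u\},\ \gamma_p<\beta\leq\delta\},
\]
where $\mathbf 0$ is the constant zero function on $[\gamma_p,\delta)$. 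This set is downward closed and its new nodes all lie strictly above level $\gamma_p$, so $t_q\cap{}^{\leq\gamma_p}2=t_p$. It is normal because every new node of height $\beta<\delta$ extends to a node of height $\delta$, and every old node already extends to level $\gamma_p$ inside $t_p$.

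Next define the $b$-coordinate. For an old node $v$, the value $b_p(v)$ is a top node of $t_p$. Let $b_q(v)$ be its continuation to level $\delta$: the zero continuation when $b_p(v)\neq u$, and $u^\frown\mathbf 0$ when $b_p(v)=u$. For a new node $v$, let $b_q(v)$ be any node of $(t_q)_\delta$ extending $v$. For example, a restriction of $u^\frown d$ goes to $u^\frown d$ itself, where $d$ is any function witnessing that $v$ is such a restriction. Then clause~(3) holds, and clause~(c) holds because $b_p(v)\subseteq b_q(v)$.

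The $\ell$- and $f$-coordinates are left unchanged: $\ell_q=\ell_p$ and $f_q=f_p$. Then $\domn(\ell_q)$ is still closed in $\delta+1$, since it is a closed subset of $\gamma_p+1$. Every old ladder value keeps its meaning, because levels at most $\gamma_p$ are unchanged. For old $f$-values, each $z\in\domn(f_p)$ satisfies $z\subseteq\gamma_p$ and hence $\widehat z\leq\gamma_p$. So $t_q[z]=t_p[z]$ and the old values remain legal. The support, domain-suprema and disjointness clauses are inherited verbatim, and the clauses (a)--(d) of the order are immediate. Thus $q\leq p$ is a condition, and $\{u^\frown d:d\in{}^\mu2\}\subseteq(t_q)_\delta$.

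I expect the only delicate point to be this check that $f_p$-values stay legal, that is, that $t_q[z]$ computed in $q$ agrees with $t_p[z]$. It reduces to $\widehat z\leq\gamma_p$, and the displayed discussion of $t[z]$ before \cref{def:typed-forcing} handles the cofinal case.

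For the consequence, density gives a $q\in G$ as above. End extension preserves level $\delta$, so $(T_\alpha)_\delta\supseteq(t_q)_\delta$. The map $d\mapsto u^\frown d$ from $({}^\mu2)^V$ into $(T_\alpha)_\delta$ is injective, which proves the claim.
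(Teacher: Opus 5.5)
Your proof is correct and follows the paper's argument essentially step for step. The paper also attaches a full binary subtree of height $\mu$ above a single top node $u$ with $\delta=\gamma_p+\mu$, adds one chain above every other top node, and extends the $b$-values coherently. It leaves $\ell_p$ and $f_p$ unchanged, using $\widehat z\leq\gamma_p$ to keep old $f$-values legal, and concludes by density and end extension. The one point you leave implicit, that the new tree height $\delta+1$ is still below $\alpha$, is immediate because $\alpha$ is a limit ordinal.
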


\begin{proof}
Put $\eta=\gamma_p$ and fix $u\in(t_p)_\eta$.  Above $u$, attach a copy of
${}^{<\mu}2$.  For $s\in{}^\xi2$ with $\xi<\mu$, the corresponding node
$u^\frown s$ lies on level $\eta+\xi$.  At the limit level
\[
   \delta=\eta+\mu
\]
add every node $u^\frown d$ with $d\in{}^\mu2$.  Above every other old top
node, add a single chain to level $\delta$.  Since $\alpha$ is a cardinal and
$\eta,\mu<\alpha$, the ordinal $\delta+1$ is below $\alpha$.

Every node in the binary subtree above $u$ has a continuation on the new top
level, and every node on one of the other chains has its chain continuation.
Extend each old $b$-value to a compatible new top node and assign a top
continuation to every new node.  The resulting tree is normal and end-extends
$t_p$, while the new $b$-coordinate coherently extends $b_p$.

Leave $\ell_p$ and $f_p$ unchanged.  Their domains are bounded by the old top
height.  Every limit point of $\domn(\ell_p)$ in $\delta+1$ is therefore at
most $\eta$ and belongs to the old closed domain.  Moreover, for every
$z\in\domn(f_p)$, $\widehat z\leq\eta$, and hence the end extension gives
$t_q[z]=t_p[z]$.  Thus all old non-dummy values remain legal.  Below or at
$\eta$, nowhere stationarity is inherited from $p$.  Above $\eta$, the old
support is bounded.  Every old $f$--$\ell$ disjointness clause is also
preserved.  The resulting tuple is a
condition $q\leq p$ with the displayed top-level copy of ${}^\mu2$.

For fixed $\mu$, the conditions supplied by the first part form a dense set.
A generic filter meets it, and end extension preserves the displayed level.
\end{proof}

\begin{corollary}\label{cor:hm-restriction-wide}
Let $\alpha\geq\omega_2$ be regular and let
$G\subseteq\Sstar(\alpha)$ be generic.  For
$x\in\PP_{\omega_2}\alpha$, put
\[
   \mathcal T^{\omega_2}_{\alpha,x}
      =\{r\restriction x:r\in(T_\alpha)_{\sup(x)+1}\}.
\]
Then some level of this restriction family has size at least $\omega_2$.
Consequently, it is not a $\PP_{\omega_2}\alpha$-tree.
\end{corollary}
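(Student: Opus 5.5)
We apply \cref{lem:wide-level} in the generic extension with $\mu=\omega_1$. This is legitimate because $\alpha\geq\omega_2$ is regular, so $\mu=\omega_1<\alpha$ is an infinite ground-model cardinal. The lemma produces a limit ordinal $\delta<\alpha$, a node $u\in(T_\alpha)_\eta$ with $\eta<\delta$, and the level $(T_\alpha)_\delta$ containing every $u^\frown d$ for $d\in({}^{\omega_1}2)^V$. From the construction in that lemma we also know $\delta=\eta+\omega_1$. Hence $|\delta|\leq\omega_1$ in $V$, and therefore also in $V[G]$, since \cref{thm:closure} shows that cardinals $\leq\alpha$ are preserved.

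Next we choose the index $x$ so that restriction to $x$ loses no information at level $\delta$. The most direct choice is the whole interval, $x=\delta$ viewed as a subset of $\alpha$. Then $|x|\leq\omega_1<\omega_2$, so $x\in\PP_{\omega_2}\alpha$, and $\sup(x)+1=\delta+1$ because $\delta$ is a limit ordinal. Every node $u^\frown d$ on level $\delta$ has an extension $r\in(T_\alpha)_{\delta+1}$, by normality from \cref{lem:designated-branches}, and $r\restriction x=u^\frown d$. So the map $d\mapsto u^\frown d$ injects $({}^{\omega_1}2)^V$ into $\mathcal T^{\omega_2}_{\alpha,x}$. The injection is immediate, since distinct $d$ give distinct functions on $\delta$.

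It remains to compute the cardinality in $V[G]$. In $V$ we have $|({}^{\omega_1}2)^V|=2^{\omega_1}\geq\omega_2$. Fix in $V$ an injection of $\omega_2$ into $({}^{\omega_1}2)^V$. Since $\omega_2^V=\omega_2^{V[G]}$ by preservation of cardinals $\leq\alpha$, this injection witnesses $|\mathcal T^{\omega_2}_{\alpha,x}|\geq\omega_2$ in $V[G]$. Consequently the level at $x$ violates the requirement $|\mathcal T_x|<\omega_2$ in the definition of a $\PP_{\omega_2}\alpha$-tree, so the family is not such a tree.

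The only delicate point is the bookkeeping of the endpoint. We need $\sup(x)+1=\delta+1$, so that the restriction is taken from the level just above the wide level rather than from the wide level itself. This is why $x=\delta$ is chosen instead of, for example, a set of size $\omega_1$ with a maximum.
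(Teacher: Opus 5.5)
There is a genuine gap in your choice $x=\delta$ when $\alpha>\omega_2$. The wide-level construction of \cref{lem:wide-level} is carried out above the top height $\eta=\gamma_p$ of whatever condition you start from. So $\delta=\eta+\omega_1$ has cardinality $|\eta|+\omega_1$, and your inference ``hence $|\delta|\leq\omega_1$'' fails as soon as $\eta\geq\omega_2$.

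You also cannot arrange $\eta<\omega_2$ by a density argument. If $\alpha>\omega_2$, there is a legal condition $p$ with top height $\gamma_p=\omega_2$ whose tree is a single chain, with $\ell_p=f_p=\varnothing$ and $b_p$ constantly the top node. By clause~(a) of the ordering in \cref{def:typed-forcing}, every level $\xi\leq\omega_2$ of $T_\alpha$ is then a singleton in any generic filter containing $p$. Below such a $p$, every level carrying a copy of $({}^{\omega_1}2)^V$ sits at some $\delta>\omega_2$, and then $x=\delta\notin\PP_{\omega_2}\alpha$. Since the corollary is asserted for every generic $G$, your argument does not go through. It is fine in the special case $\alpha=\omega_2$, because there every $\delta<\omega_2$ has size at most $\omega_1$.

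The fix is to index by the tail only, as the paper does: take $x=[\eta,\delta)=\{\eta+\xi:\xi<\omega_1\}$.
\begin{itemize}
\item This set has size $\omega_1$ regardless of $\eta$, so $x\in\PP_{\omega_2}\alpha$.
\item It has no maximum and supremum $\delta$, so $\sup(x)+1=\delta+1$ and your endpoint bookkeeping goes through unchanged.
\item The restriction $(u^\frown d)\restriction x$ still determines $d$, because $(u^\frown d)(\eta+\xi)=d(\xi)$.
\end{itemize}
So you do not need restriction to $x$ to lose no information at level $\delta$; only the $\omega_1$ coordinates added above $u$ matter. The rest of your argument matches the paper: extending the wide nodes to level $\delta+1$ by normality, and transferring a ground-model injection of $\omega_2$ into $({}^{\omega_1}2)^V$ using preservation of $\omega_2$.
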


\begin{proof}
Use the construction from \cref{lem:wide-level} with
$\mu=\omega_1$.  If $\eta$ is the old top height, that construction takes
$\delta=\eta+\omega_1$.  Put
\[
   x=[\eta,\delta)=\{\eta+\xi:\xi<\omega_1\}.
\]
Then $x\in\PP_{\omega_2}\alpha$.  The nodes
$u^\frown d$ for $d\in({}^{\omega_1}2)^V$ lie on level $\delta$ and have
pairwise distinct restrictions to $x$.  By normality, extend them to level
$\delta+1$.  Their restrictions to $x$ are unchanged.  Hence there is an
injection
\[
   ({}^{\omega_1}2)^V\longrightarrow
      \mathcal T^{\omega_2}_{\alpha,x}.
\]
In $V$, fix an injection
$\omega_2\longrightarrow({}^{\omega_1}2)^V$.  By \cref{thm:closure},
$\omega_2$ is preserved, so in $V[G]$ the composition of these injections
witnesses that the displayed level has size at least $\omega_2$.
\end{proof}

The construction in \cref{lem:wide-level} modifies only $t$ and $b$.  It leaves
$\ell$ and $f$ unchanged and fixes all old tree levels.  Hence, for
$\alpha=\omega_2$, this construction is compatible with the displayed tree and branch
clauses \cite[Definition~5.5(1) and~(3), p.~1123]{HayutMagidor2022}, independently
of the $f$-typing discrepancy \cite[Definition~5.5(4), p.~1123, and Claims~5.7--5.8, p.~1124]{HayutMagidor2022}.  In the case $\mu=\omega_1$, let $\eta$ denote the old top height and put
$\delta=\eta+\omega_1$ and $x=[\eta,\delta)$.  Already in $V$, the
restriction level at $x$ defined by the formula in
\cite[p.~1122]{HayutMagidor2022} contains a copy of $({}^{\omega_1}2)^V$, so
its size is at least $\omega_2^V$.  Thus the local width obstruction lies in the displayed
tree and branch clauses themselves.

\begin{theorem}\label{thm:countable-cover}
Let $\alpha\geq\omega_2$ be regular.  In every
$\Sstar(\alpha)$-generic extension, $L_\alpha$ is not generated
by any countable family of cofinal branches through $T_\alpha$.  Equivalently,
\[
    \bcov(T_\alpha,L_\alpha)\geq\omega_1
\]
for the ordinal-tree branch-covering number.
\end{theorem}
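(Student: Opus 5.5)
The quickest route is the small-cover argument.  Work in $V[G]$ and take $\rho=\omega_1$ in \cref{cor:branch-cover-lower-bound}.  The aim is a family $\mathcal C$ of $\omega_1$ pairwise distinct cofinal branches through $T_\alpha$, each meeting $L_\alpha$ $\omega_1$-cofinally.  Then $\bcov(T_\alpha,L_\alpha)\geq\omega_1$, which says exactly that no countable family of cofinal branches generates $L_\alpha$.

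For the branches I take designated ones.  By \cref{cor:bistationary} (via \cref{thm:stationary-traces}), every $B_\alpha(u)$ with $u\in T_\alpha$ has stationary trace $\Tr_{\omega_1}(B_\alpha(u),L_\alpha)$.  A stationary subset of $\PP_{\omega_1}\alpha$ is cofinal under inclusion, so each designated branch meets $L_\alpha$ $\omega_1$-cofinally.  By \cref{lem:designated-branches}, each designated branch is also a cofinal branch through $T_\alpha$.

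It remains to find $\omega_1$ distinct designated branches, and this is the only real step.  I use \cref{lem:wide-level} with $\mu=\omega$.  Genericity gives a level $\delta$ of $T_\alpha$ containing $2^\omega\geq\omega_1$ nodes $u^\frown d$ with $d\in({}^\omega2)^V$.  Since $b_p(v)$ always extends $v$, distinct nodes $v\neq v'$ on the same level satisfy $B_\alpha(v)\restriction\delta=v\neq v'=B_\alpha(v')\restriction\delta$.  So $v\mapsto B_\alpha(v)$ is injective on a level, which yields $\omega_1$ distinct designated branches.  Because the forcing adds no new countable sequences and preserves $\omega_1$ (\cref{thm:closure}), $\omega_1$ computed in $V[G]$ is the ground-model $\omega_1$ and the family really has size $\omega_1$ there.

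A simpler alternative avoids width entirely: choose $\omega_1$ distinct nodes on level $\omega_1$ (a normal binary tree of height $\alpha$ has $\geq 2^{\omega_1}$ nodes there, or simply infinitely many distinct nodes at levels $<\alpha$, splitting as needed), and apply the same restriction argument.  Either way, \cref{cor:branch-cover-lower-bound} finishes.  If a second, direct proof is wanted, I would diagonalize instead.  Fix names $\langle\dot c_n:n<\omega\rangle$ for cofinal branches, decide $\dot c_n\restriction\gamma$ along an $\omega$-sequence as in \cref{prop:no-club}, and choose a seed $u$ off all decided branches.  Then apply \cref{lem:targeted-insertion} to put the top node $x=b_q(u)$ into $L_\alpha$ while ensuring $x\neq\dot c_n\restriction\delta$ for every $n$.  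Arranging this inequality for all $n$ simultaneously is the delicate point, which is why the small-cover route is preferable.
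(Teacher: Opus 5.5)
Your main argument is the paper's proof: \cref{lem:wide-level} with $\mu=\omega$ gives $\omega_1$ nodes on one level, their designated branches are distinct because $v\subseteq B_\alpha(v)$, each meets $L_\alpha$ $\omega_1$-cofinally by \cref{cor:bistationary}, and \cref{cor:branch-cover-lower-bound} with $\rho=\omega_1$ finishes. Two side remarks are wrong. First, the ``simpler alternative'' that avoids width fails, because normality here imposes no splitting: a condition whose tree is a single chain through level $\omega_1$ is legal and forces $(T_\alpha)_{\omega_1}$ to be a singleton, so a density argument such as \cref{lem:wide-level} is genuinely needed. Second, in your diagonal variant the simultaneous inequality is not delicate, since $b_q(u_*)\supseteq u_*\neq\dot c_n\restriction\gamma$ once the seed is chosen off the decided values (though choosing it still needs a level with uncountably many nodes); this is how \cref{prop:uniform-diagonal} proceeds.
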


\begin{proof}
Apply \cref{lem:wide-level} with $\mu=\omega$.  Since
$\Sstar(\alpha)$ preserves $\omega_1$ and $(2^\omega)^V\geq\omega_1$, fix a
level of the generic tree containing $\omega_1$ distinct nodes and write
\[
   U=\{u_\xi:\xi<\omega_1\}.
\]
The designated branches $B_\alpha(u_\xi)$ are pairwise distinct because
they pass through distinct nodes on the same level.  By
\cref{cor:bistationary}, every one of these branches meets $L_\alpha$
$\omega_1$-cofinally.

Applying \cref{cor:branch-cover-lower-bound} with $\rho=\omega_1$ to
this family gives $\bcov(T_\alpha,L_\alpha)\geq\omega_1$.
\end{proof}

\begin{proposition}\label{prop:kurepa-bounded-persistence}
Let $\alpha\geq\omega_2$ be regular, and let
$\Sstar_{\mathrm K}(\alpha)$ be the forcing obtained by restricting
$\Sstar(\alpha)$ to conditions satisfying
\[
   |(t_p)_\xi|\leq|\xi|+\omega
   \qquad(\xi\leq\gamma_p),
\]
with the inherited order.  The forcing $\Sstar_{\mathrm K}(\alpha)$ is
$\sigma$-closed.  After adjoining a formal maximum, it is
$\alpha$-strategically closed.  Write
$T^{\mathrm K}_\alpha$, $L^{\mathrm K}_\alpha$, and
$B^{\mathrm K}_\alpha(u)$ for its generic tree, ladder-coordinate set, and
designated branches.  Then, for every nonempty countable
$U\subseteq T^{\mathrm K}_\alpha$, the intersection
\[
   \bigcap_{u\in U}
   \Tr_{\omega_1}(B^{\mathrm K}_\alpha(u),L^{\mathrm K}_\alpha)
\]
is bistationary, and no countable family of cofinal branches through
$T^{\mathrm K}_\alpha$ generates $L^{\mathrm K}_\alpha$.
\end{proposition}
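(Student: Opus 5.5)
The plan is to rerun every construction used so far and verify that it stays inside $\Sstar_{\mathrm K}(\alpha)$, that is, that each new level $\xi$ has at most $|\xi|+\omega$ nodes. Every such construction either adds a chain above each old top node or adds one limit node per old node. Since $\Sstar_{\mathrm K}(\alpha)$ carries the inherited order, any lower bound built in $\Sstar(\alpha)$ that satisfies the level bound is a lower bound in $\Sstar_{\mathrm K}(\alpha)$.

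\emph{Step 1: closure.} Take the $\sigma$-closure and strategic-closure constructions from the proof of \cref{thm:closure}. The old levels are unchanged. The new top level $\delta$ consists of the coherent limits $B(v)$, indexed by the old nodes $v\in\widetilde t$. The number of old nodes is at most $\sum_{\xi<\delta}(|\xi|+\omega)\leq|\delta|+\omega$. Hence the level bound holds at $\delta$.

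Even's prepared successor moves and the height-extension moves of \cref{lem:designated-branches} add one chain above each old top node. Each new level $\zeta$ therefore has size $|(t)_{\gamma}|\leq|\gamma|+\omega\leq|\zeta|+\omega$.

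The remaining verifications in \cref{thm:closure} (closed $\ell$-domains, fresh suprema, nowhere stationary support, and $f$--$\ell$ disjointness via reserved dummies) do not mention level sizes, so they carry over verbatim. Thus $\Sstar_{\mathrm K}(\alpha)$ is $\sigma$-closed and $\alpha$-strategically closed, hence $<\alpha$-distributive. In particular, $\omega_1$ and $\alpha$ are preserved.

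\emph{Step 2: traces.} Check in turn \cref{lem:targeted-insertion,lem:anchored-insertion,lem:club-point}. In each of these proofs, successor steps add chains and limit steps add limit nodes indexed by old nodes, so by Step 1 they stay in $\Sstar_{\mathrm K}(\alpha)$. In \cref{lem:club-point} the elementary submodel $M$ should contain $\Sstar_{\mathrm K}(\alpha)$ in place of $\Sstar(\alpha)$.

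In \cref{prop:no-club} the top level at $\delta$ is enlarged by the single node $y$. This still gives at most $|\delta|+\omega$ nodes, and $|\delta|\geq\omega$ anyway.

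With these checks, the proofs of \cref{thm:stationary-traces,prop:no-club,cor:bistationary} go through word for word. This gives bistationarity of every nonempty countable intersection of the designated traces.

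\emph{Step 3: countable covers.} This is the main obstacle, because the proof of \cref{thm:countable-cover} used \cref{lem:wide-level} with $\mu=\omega$. Its level $\eta+\omega$ has size $2^\omega$, which may exceed $|\eta+\omega|$ when $\eta$ is small. I would sidestep this in one of two ways.

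The first option is to take $\eta\geq(2^\omega)^V$, which is possible when $\alpha>2^\omega$, so that the copy of ${}^\omega2$ fits under the bound.

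The more robust option does not need any wide level. Use only $\omega_1$ distinct nodes: extend a condition to height $\eta\geq\omega_1$, and above $\omega_1$ old-or-chain nodes split once. Equivalently, note that by density the generic tree has some level $\xi\geq\omega_1$ with $\omega_1$ nodes. To get this, modify the chain-extension step so that at level $\eta+1$ we attach $\omega_1$ distinct successors through a binary splitting pattern. Concretely, add a copy of ${}^{<\omega_1}2$ restricted to $\omega_1$ cofinal branches of length $\omega_1$ above $u$. The level sizes are then at most $\omega_1\leq|\eta+\xi|$ once $\eta\geq\omega_1$, so the bound holds.

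The designated branches through these $\omega_1$ distinct nodes on a common level are pairwise distinct. By Step 2, each of them meets $L^{\mathrm K}_\alpha$ $\omega_1$-cofinally. Finally, \cref{cor:branch-cover-lower-bound} with $\rho=\omega_1$ gives that no countable family of cofinal branches generates $L^{\mathrm K}_\alpha$.
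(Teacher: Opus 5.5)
Your proposal is correct and follows the paper's proof: you check that every closure, insertion, club-point, and no-club construction only adds chains above old top nodes or one coherent limit per old node (plus the single node $y$), so the bound $|(t_p)_\xi|\leq|\xi|+\omega$ is preserved. You then replace the $\mu=\omega$ wide level by a pruned splitting of size $\omega_1$ above a top height $\eta$ with $|\eta|\geq\omega_1$. The only difference is that the paper prunes over $\omega$ levels with a set $D\subseteq{}^\omega2$ of size $\omega_1$, while your concrete version uses $\omega_1$ branches of length $\omega_1$; this works equally well. Two small repairs are needed: drop the phrasing about attaching $\omega_1$ successors at level $\eta+1$, which is impossible in a binary tree, and count the chains above the other old top nodes (at most $|\eta|+\omega$ nodes) in the level bound.
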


\begin{proof}
It remains to verify the level bound throughout these constructions.  Every
genuine Odd move satisfies the bound by definition, and Even maintains it.  Successor extensions in these
constructions use one chain above each old top node, so every new level has size
at most that of the old top level.
At a nonzero limit height $\delta$, the constructions add at most one coherent
top limit for each node already present below $\delta$.  If $t$ is the union of
the old trees below $\delta$, then
\[
   |t|
      \leq \sum_{\xi<\delta}(|\xi|+\omega)
      \leq |\delta|+\omega.
\]
Adding the single extra top node in the no-club argument does not change the
bound.  The basic height extension used to obtain the generic tree and
designated branches also preserves the bound.  Thus the $\sigma$-closure lower bounds, Even's strategy, and the insertion and
no-club constructions all remain inside $\Sstar_{\mathrm K}(\alpha)$.  As in
\cref{thm:closure}, strategic closure gives $<\alpha$-distributivity and
preserves $\omega_1$ and $\alpha$.  The stationary-trace and no-club arguments
therefore give the stated bistationarity conclusion for the companion forcing.

The full width conclusion of \cref{lem:wide-level} is unnecessary for the
branch-cover argument.  Given
$p\in\Sstar_{\mathrm K}(\alpha)$, first end-extend to a top height
$\eta>\gamma_p$ with $\eta<\alpha$ and $|\eta|\geq\omega_1$.  Regularity of
$\alpha$ gives $\eta+\omega<\alpha$.  Choose
$D\subseteq{}^\omega2$ of size $\omega_1$.  Above one old top node $u$,
place the restrictions
\[
   \{u^\frown(d\restriction n):d\in D\}
\]
on level $\eta+n$ for $n<\omega$, and place
$\{u^\frown d:d\in D\}$ on level $\eta+\omega$.  Above every other old top
node, add a single chain.  The split part contributes at most $\omega_1$ nodes
to any new level and the remaining chains at most $|\eta|+\omega$, by the bound
at level $\eta$.  Hence every new level has size at most $|\eta|+\omega$.  Extend
every old $b$-value to a compatible new top node.  For each split node, choose
a member of $D$ extending the node's displayed finite tail and use the corresponding
top node as the $b$-value.  Leave $\ell$ and $f$ unchanged.  Their
domains lie below the old top height, so end extension fixes all restriction sets relevant to them
and preserves the old $f$--$\ell$ disjointness clauses.  The resulting tuple is a legal
stronger condition.

Hence conditions producing a level with at least $\omega_1$ distinct nodes
are dense in $\Sstar_{\mathrm K}(\alpha)$.  Such a level gives $\omega_1$ pairwise distinct designated branches.  By the
bistationarity conclusion, their traces are stationary and hence
$\omega_1$-cofinal.  Applying
\cref{cor:branch-cover-lower-bound} with $\rho=\omega_1$ rules out a
countable generating family.
\end{proof}

Under the Kurepa bound, every condition level has cardinality less than
$\alpha$, and the two core trace and cover conclusions remain valid.
The companion forcing imposes only this ordinal-level bound and leaves the
low-cofinality ladder-coordinate convention unchanged.  For $\alpha>\omega_2$,
this bound also does not by itself ensure that restriction levels have cardinality
less than $\omega_2$, as required for a $\PP_{\omega_2}\alpha$-tree.

\Cref{prop:uniform-diagonal} gives a forcing-local argument for
\cref{thm:countable-cover} that is independent of the stationary-trace argument.  By the
maximum principle, any nonempty countable generating family has an
$\omega$-enumeration name.  Applying the proposition densely to such a name
produces a member of $L_\alpha$ that no branch in the generating family extends.  The empty
family is ruled out by \cref{lem:targeted-insertion}.

\begin{proposition}\label{prop:uniform-diagonal}
Let $\alpha\geq\omega_2$ be regular.  Suppose that
$p\in\Sstar(\alpha)$ and $\langle\dot c_n:n<\omega\rangle$ is a sequence of
$\Sstar(\alpha)$-names such that
\[
   p\forces
   ``\dot c_n\text{ is a cofinal branch through }T_\alpha''
   \qquad(n<\omega).
\]
There exist $q\leq p$, $\delta\in\domn(\ell_q)$, and
$y\in\ell_q(\delta)$ such that
\[
   q\forces \check y\not\subseteq\dot c_n
   \qquad(n<\omega).
\]
\end{proposition}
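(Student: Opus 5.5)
The plan is to diagonalize at a single new top level, following the remark before the proposition: first decide the branch names far enough up, then choose the seed so that its designated branch splits off from every decided branch, and finally insert a ladder coordinate along that seed.

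\textbf{Step 1: decide the branch names at a fixed height.} Strengthen $p$ to a condition $p'$ of top height $\eta=\gamma_{p'}$. I would do this by a countable recursion using the $<\alpha$-distributivity of \cref{thm:closure} and $\sigma$-closure. Each name $\dot c_n\restriction\eta$ is decided as some $v_n\in(t_{p'})_\eta$. Since the recursion raises the height while deciding successive names, I would run it as an $\omega$-sequence. At stage $k$, decide $\dot c_0\restriction\gamma,\dots,\dot c_k\restriction\gamma$ at the current height $\gamma$. Then take the countable-limit lower bound and let $\eta$ be the limit height. The decided restrictions cohere, so their unions $v_n$ are nodes of the limit tree. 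Each $v_n$ is forced to be an initial segment of $\dot c_n$, because the lower bound forces $\dot c_n\restriction\gamma_k=v_n\restriction\gamma_k$ for all $k$, and $\dot c_n$ is a branch.

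\textbf{Step 2: split off a fresh seed.} Apply \cref{lem:wide-level} with $\mu=\omega$ above one top node. This yields $p''\le p'$ and a level $\delta_0=\eta+\omega$ containing $2^\omega$ nodes $u^\frown d$. Extend once more with distributivity to decide $\dot c_n\restriction(\delta_0+1)=w_n$ for every $n$, keeping the same level. Only countably many nodes $w_n\restriction\delta_0$ occur on level $\delta_0$. Since that level has uncountably many nodes, choose a node $u^*$ on it that differs from every $w_n\restriction\delta_0$. Let $\beta_n<\delta_0$ be a coordinate where $u^*$ and $\dot c_n$ disagree; this coordinate is decided in the ground model.

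\textbf{Step 3: targeted insertion.} Apply \cref{lem:targeted-insertion} below the current condition with seed $u^*$. This gives $q$, an ordinal $\delta$ of cofinality $\omega_1$, and $x=b_q(u^*)\supseteq u^*$ with $y:=x\in\ell_q(\delta)$. Then $y(\beta_n)=u^*(\beta_n)\ne\dot c_n(\beta_n)$ is forced for every $n$, so $q\forces\check y\not\subseteq\dot c_n$. If one prefers a countable member of the ladder value, pick $z\in E_x$ containing $\{\beta_n:n<\omega\}$ and take $y=x\restriction z$; either choice works.

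\textbf{Main obstacle.} The only delicate point is Step 2: all decisions about the $\dot c_n$ must be made before the seed is chosen. The insertion lemma extends the tree further, and a named branch could otherwise pass through the inserted node. This ordering is exactly what the fixed-seed remark warns about. Deciding the disagreement coordinates below $\delta_0$ in advance makes the conclusion immune to later extensions. The rest is routine bookkeeping: level size uncountable versus countably many decided nodes, and the fact that conditions in $G$ extend $q$.
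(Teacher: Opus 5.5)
Your proposal is correct and follows essentially the paper's route. You widen a level via \cref{lem:wide-level} with $\mu=\omega$, decide all the $\dot c_n$ at that level, pick a seed $u^*$ avoiding the countably many decided nodes, and then apply \cref{lem:targeted-insertion} with that seed. Your Step~1 is redundant, since Step~2 redoes all the decisions; its claim that the unions $v_n$ lie on the limit level would in any case need more than coherence, for instance adding them explicitly as in \cref{prop:no-club}. Taking $y=x$ itself is a legitimate simplification of the paper's choice $y=x\restriction z$ with $\{\beta_n:n<\omega\}\subseteq z\in E_x$.
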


\begin{proof}
Apply \cref{lem:wide-level} with $\mu=\omega$ to strengthen $p$ to
$p_0$ and fix a level $\gamma$
with
\[
   U=\{u_\xi:\xi<\omega_1\}\subseteq(t_{p_0})_\gamma
\]
consisting of distinct nodes.  Using the $<\alpha$-distributivity from
\cref{thm:closure}, recursively choose $p_{n+1}\leq p_n$ and
$v_n\in(t_{p_0})_\gamma$ such that
\[
   p_{n+1}\forces
       \dot c_n\restriction\gamma=\check v_n.
\]
End extensions do not alter the $\gamma$-th level, and
$\sigma$-closure supplies a common lower bound $r$ of the decision sequence.
After all decisions have been made, choose
\[
   u_*\in U\setminus\{v_n:n<\omega\}.
\]

Apply \cref{lem:targeted-insertion} below $r$ with seed $u_*$ and
$\theta=\gamma$.  Obtain $q\leq r$, a top height $\delta$, a top node
$x=b_q(u_*)$, and a club $E_x\subseteq\PP_{\omega_1}\delta$.  For every
$n<\omega$, choose $\beta_n<\gamma$ with
$u_*(\beta_n)\neq v_n(\beta_n)$, and put
$A=\{\beta_n:n<\omega\}$.  Choose $z\in E_x$ with $A\subseteq z$ and set
\[
   y=x\restriction z\in\ell_q(\delta).
\]
For every $n<\omega$,
\[
   q\forces
   \dot c_n(\beta_n)=v_n(\beta_n)\neq
   u_*(\beta_n)=x(\beta_n)=y(\beta_n).
\]
Hence the single ladder-coordinate node $y$ is not extended by any $\dot c_n$.
\end{proof}

\begin{corollary}\label{cor:cover-interval}
Let $\alpha\geq\omega_2$ be regular.  In the
$\Sstar(\alpha)$-generic extension,
\[
   \omega_1\leq\bcov(T_\alpha,L_\alpha)\leq|T_\alpha|.
\]
\end{corollary}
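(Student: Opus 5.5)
The lower bound $\omega_1\leq\bcov(T_\alpha,L_\alpha)$ is exactly \cref{thm:countable-cover}, so only the upper bound needs an argument. For it I will use the designated family $\mathcal B_\alpha=\{B_\alpha(u):u\in T_\alpha\}$ from \cref{prop:designated-cover}.

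By \cref{lem:designated-branches}, every $B_\alpha(u)$ is a cofinal branch through $T_\alpha$. Hence $\mathcal B_\alpha\subseteq\Br(T_\alpha)$. By \cref{prop:designated-cover}, $\mathcal B_\alpha$ generates $L_\alpha$. In particular $\bcov(T_\alpha,L_\alpha)$ is not $\infty$: some subfamily of $\Br(T_\alpha)$ generates $L_\alpha$. Therefore
\[
   \bcov(T_\alpha,L_\alpha)\leq|\mathcal B_\alpha|.
\]

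The map $u\mapsto B_\alpha(u)$ is a surjection from $T_\alpha$ onto $\mathcal B_\alpha$. Choosing one preimage for each member of $\mathcal B_\alpha$ gives an injection $\mathcal B_\alpha\to T_\alpha$. This choice is legitimate in $V[G]$, since $V[G]$ satisfies ZFC. Consequently $|\mathcal B_\alpha|\leq|T_\alpha|$, which combined with the previous display yields $\bcov(T_\alpha,L_\alpha)\leq|T_\alpha|$.

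Nothing here is a real obstacle; the argument only assembles earlier results. The one point to watch is that the minimum in the definition of $\bcov$ ranges over a nonempty collection of families, so that the value is a genuine cardinal rather than $\infty$. The existence of the generating family $\mathcal B_\alpha$ supplies exactly this, and the two inequalities then combine to give the stated interval.
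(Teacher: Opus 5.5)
Your proposal is correct and follows the paper's own proof: the lower bound is \cref{thm:countable-cover}, and the upper bound comes from the designated generating family $\mathcal B_\alpha$ of \cref{prop:designated-cover}, which is indexed by $T_\alpha$. Your extra remarks, that $\bcov$ is a genuine cardinal rather than $\infty$ and that $|\mathcal B_\alpha|\leq|T_\alpha|$, only make explicit details the paper leaves implicit.
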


\begin{proof}
The lower bound is \cref{thm:countable-cover}.  The upper bound follows from
the designated generating family in \cref{prop:designated-cover}.
\end{proof}

The scaled two-cardinal analysis rests on a comparison between cofinal branches of the ordinal
tree and branches of its restriction system.  More generally, suppose that
$\alpha$ is a regular cardinal in the ambient universe and that
$T\subseteq{}^{<\alpha}2$ is a normal binary tree of height $\alpha$.
For $x\in\PP_\alpha\alpha$, put
\[
   \beta_x=\sup(x)+1,
   \qquad
   \mathcal T_x=\{r\restriction x:r\in T_{\beta_x}\}.
\]
Regularity of $\alpha$ ensures $\beta_x<\alpha$.  The successor is included
so that $x\subseteq\beta_x$ also when $x$ has a maximum.  Normality of $T$
gives nonempty levels, and downward closure gives coherent restriction maps.
Thus the displayed system is a $\PP_\alpha\alpha$-tree exactly when each
level has size less than $\alpha$.

For the generic tree, \cref{thm:closure} ensures that $\alpha$ is a regular
cardinal in $V[G]$.  Every domain occurring in $L_\alpha$ therefore belongs to
$\PP_\alpha\alpha$.  If $y\in L_\alpha$ has ordinal domain $\xi$, then
$y\in(T_\alpha)_\xi$.  The node $y$ itself witnesses
$y\in\mathcal T_\xi$ when $\beta_\xi=\xi$, and normality supplies an extension
on level $\beta_\xi$ when $\beta_\xi>\xi$.  If
$y=x\restriction z$ arises from a ladder coordinate $\xi$ of uncountable
cofinality, then $z\in\PP_{\omega_1}\xi$ and hence $\beta_z<\xi$.
Downward closure gives
$x\restriction\beta_z\in(T_\alpha)_{\beta_z}$, which witnesses
$y\in\mathcal T_z$.  Thus $L_\alpha$ is a set of nodes of the restriction system.  By
\cref{prop:level-domain-failure}, it is not a ladder system on that restriction
system in the full sense of
\cite[Definition~4.2, p.~1119]{HayutMagidor2022}, even when the level-size
condition holds.

\begin{lemma}\label{lem:branch-comparison}
Let $\mathcal T$ be the restriction system just defined from $T$.  A
function $c\in{}^\alpha2$ is a cofinal branch through
the ordinal tree $T$ if and only if
\[
   c\restriction x\in\mathcal T_x
   \qquad\text{for every }x\in\PP_\alpha\alpha.
\]
Consequently, whenever the level-size condition holds, so that $\mathcal T$
is a $\PP_\alpha\alpha$-tree, its branches are exactly the cofinal branches
through $T$.
\end{lemma}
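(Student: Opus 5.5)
The plan is to prove the two implications directly from the definitions, and then read off the consequence from the remark preceding the lemma.

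For the forward direction, let $c$ be a cofinal branch through $T$, so $c\restriction\beta\in T_\beta$ for every $\beta<\alpha$. Fix $x\in\PP_\alpha\alpha$. Regularity of $\alpha$ gives $\beta_x=\sup(x)+1<\alpha$. Put $r=c\restriction\beta_x\in T_{\beta_x}$. Since $x\subseteq\beta_x$, we have $r\restriction x=c\restriction x$, and so $c\restriction x\in\mathcal T_x$.

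For the converse, suppose $c\restriction x\in\mathcal T_x$ for every $x\in\PP_\alpha\alpha$. Fix $\beta<\alpha$; the task is to show $c\restriction\beta\in T_\beta$.
\begin{itemize}
\item If $\beta=\gamma+1$ is a successor, take $x=\beta$, viewed as a set of ordinals. It has size $<\alpha$, and $\beta_x=\gamma+1=\beta$. So some $r\in T_\beta$ satisfies $r\restriction\beta=c\restriction\beta$. Since $\domn(r)=\beta$, this gives $r=c\restriction\beta$.
\item If $\beta$ is a limit, take $x=\beta+1$, or equivalently use the successor case at $\beta+1$. This gives $c\restriction(\beta+1)\in T_{\beta+1}$, and downward closure of $T$ then yields $c\restriction\beta\in T_\beta$.
\item The case $\beta=0$ is trivial, since the root lies in $T$ by normality.
\end{itemize}
Hence every initial segment of $c$ lies in $T$, so $c$ is a cofinal branch.

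The only subtle point is the endpoint correction. Using $x=\beta$ directly for limit $\beta$ would give $\beta_x=\beta+1$, and one then needs $\domn(r)\supseteq x$ together with restriction. Going through $\beta+1$ and downward closure avoids any issue. I expect no real obstacle.

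For the consequence, the preceding remark shows that $\mathcal T$ is a $\PP_\alpha\alpha$-tree exactly when each level has size less than $\alpha$. In that case, a branch of $\mathcal T$ is by definition a function $c\in{}^\alpha2$ with $c\restriction x\in\mathcal T_x$ for all $x\in\PP_\alpha\alpha$. By the equivalence just proved, these are exactly the cofinal branches through $T$.
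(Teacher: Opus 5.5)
Your proof is correct and is essentially the paper's argument; the forward direction and the consequence match it exactly. For the converse, the paper takes $x=\eta$ for every $\eta<\alpha$ and uses $\eta\leq\beta_\eta$ together with downward closure of $T$, which covers the zero, successor, and limit cases at once, so your case split is harmless but unnecessary.
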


\begin{proof}
If every ordinal initial segment of $c$ belongs to $T$, then for
$x\in\PP_\alpha\alpha$ the node $c\restriction\beta_x$ belongs to
$T_{\beta_x}$ and witnesses $c\restriction x\in\mathcal T_x$.
Conversely, suppose that $c$ is a branch through $\mathcal T$.  For every
ordinal $\eta<\alpha$, $\eta\in\PP_\alpha\alpha$.  The branch
condition at the level $x=\eta$ gives $r\in T_{\beta_\eta}$ with
$c\restriction\eta=r\restriction\eta$.  Since
$\eta\leq\beta_\eta$ and $T$ is downward closed, it follows that
$c\restriction\eta\in T_\eta$.  Hence $c$ is a cofinal branch through
$T$.
\end{proof}

\begin{proposition}\label{prop:two-cardinal-boundary}
Let $\alpha\geq\omega_2$ be regular and let
$G\subseteq\Sstar(\alpha)$ be generic.  For the scaled restriction system
$\mathcal T_\alpha$ indexed by $\PP_\alpha\alpha$ and induced by $T_\alpha$, the following conditions are equivalent.
\begin{enumerate}[label=\textup{(\alph*)},leftmargin=2.2em]
  \item every level of $\mathcal T_\alpha$ has size less than $\alpha$.
  \item $\alpha$ is a strong limit cardinal in $V$.
  \item $\alpha$ is strongly inaccessible in $V$.
\end{enumerate}
When these conditions hold, $\mathcal T_\alpha$ is a
$\PP_\alpha\alpha$-tree and
\[
    \bcov(\mathcal T_\alpha,L_\alpha)\geq\omega_1.
\]
In particular, if $\alpha$ is a successor cardinal in $V$, then
$\mathcal T_\alpha$ is not a $\PP_\alpha\alpha$-tree.
\end{proposition}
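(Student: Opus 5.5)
We will prove (b)$\Leftrightarrow$(c), then (a)$\Rightarrow$(b) using \cref{lem:wide-level}, then (c)$\Rightarrow$(a) from a level count in $V$ transferred to $V[G]$ by \cref{thm:closure}, and finally read off the closing assertions from \cref{lem:branch-comparison} and the covering corollary.

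For (b)$\Leftrightarrow$(c): $\alpha$ is regular in $V$ by hypothesis, so a strong limit $\alpha$ is strongly inaccessible, and the converse is trivial.

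For (a)$\Rightarrow$(b), argue contrapositively. Suppose $\alpha$ is not a strong limit in $V$, and fix an infinite cardinal $\mu<\alpha$ with $(2^\mu)^V\geq\alpha$. By \cref{lem:wide-level} and genericity there are $p\in G$, a node $u$ at level $\eta=\gamma_{p_{\mathrm{old}}}$, and $\delta=\eta+\mu$ with $\{u^\frown d:d\in({}^\mu2)^V\}\subseteq(T_\alpha)_\delta$. Take $x=[\eta,\delta)$, which has size $\mu<\alpha$ and hence lies in $\PP_\alpha\alpha$. Extend these nodes to level $\delta+1=\beta_x$ using normality. Their restrictions to $x$ are pairwise distinct, so there is an injection $({}^\mu2)^V\to\mathcal T_{\alpha,x}$, exactly as in \cref{cor:hm-restriction-wide}. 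Now fix in $V$ an injection $\alpha\to({}^\mu2)^V$. Since $\alpha$ remains a cardinal, $|\mathcal T_{\alpha,x}|\geq\alpha$ in $V[G]$, and (a) fails. The successor-cardinal clause is an instance: if $\alpha=\kappa^+$, then $2^\kappa\geq\alpha$ with $\mu=\kappa<\alpha$.

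For (c)$\Rightarrow$(a), this is the step needing care, because levels are computed in $V[G]$ while the bound is a ground-model fact. Fix $x\in\PP_\alpha\alpha$ in $V[G]$ and put $\beta=\beta_x<\alpha$. By $<\alpha$-distributivity (\cref{thm:closure}), the level $(T_\alpha)_\beta$ is a subset of ${}^\beta2$ of size less than $\alpha$ coded by a short sequence. More directly, $(T_\alpha)_\beta=(t_p)_\beta$ for any $p\in G$ with $\gamma_p\geq\beta$, and this set belongs to $V$ and has size at most $2^{|\beta|}<\alpha$ there, by strong inaccessibility. Cardinals $\leq\alpha$ are preserved, so $|(T_\alpha)_\beta|<\alpha$ in $V[G]$. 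Hence $|\mathcal T_{\alpha,x}|\leq|(T_\alpha)_\beta|<\alpha$. The key point is that a level is fixed once a condition reaches its height, so no new nodes appear later.

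Finally, assume the equivalent conditions. Normality and downward closure give a $\PP_\alpha\alpha$-tree, as noted before \cref{lem:branch-comparison}, and that lemma identifies its branches with $\Br(T_\alpha)$. Therefore $\bcov(\mathcal T_\alpha,L_\alpha)=\bcov(T_\alpha,L_\alpha)\geq\omega_1$ by \cref{thm:countable-cover}.
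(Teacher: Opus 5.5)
Your proof is correct and follows the paper's argument. It uses \cref{lem:wide-level} for (a)$\Rightarrow$(b), and for the converse the stabilization $(T_\alpha)_{\beta_x}=(t_p)_{\beta_x}$ together with the ground-model bound $2^{|\beta_x|}<\alpha$; the covering bound comes from \cref{lem:branch-comparison} and \cref{thm:countable-cover}. The differences are cosmetic: you index the wide level by $x=[\eta,\delta)$ as in \cref{cor:hm-restriction-wide}, where the paper simply takes $x=\delta$ and uses $\mathcal T_\delta=(T_\alpha)_\delta$, and your preliminary sentence invoking distributivity is unnecessary because the direct argument after it suffices.
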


\begin{proof}
Assume first that every level of $\mathcal T_\alpha$ has size less than
$\alpha$.  Let $\mu<\alpha$ be an infinite cardinal in $V$.  By
\cref{lem:wide-level} and genericity, there are a limit ordinal
$\delta<\alpha$ and a ground-model injection
\[
   ({}^\mu2)^V\longrightarrow(T_\alpha)_\delta.
\]
Since $\delta$ is a limit ordinal, $\beta_\delta=\delta+1$.  Normality and
downward closure of $T_\alpha$ give
\[
   \mathcal T_\delta
      =\{r\restriction\delta:r\in(T_\alpha)_{\delta+1}\}
      =(T_\alpha)_\delta.
\]
The ground-model injection remains an injection in $V[G]$.  Because $\alpha$
remains a cardinal there, the inequality
$(2^\mu)^V\geq\alpha$ would contradict
$|\mathcal T_\delta|<\alpha$.  Hence
\[
   (2^\mu)^V<\alpha
   \qquad\text{for every infinite cardinal }\mu<\alpha\text{ in }V,
\]
so $\alpha$ is a strong limit cardinal in $V$.

Conversely, suppose that $\alpha$ is a strong limit cardinal in $V$.  Let
$x\in\PP_\alpha\alpha$ in $V[G]$.  Regularity of $\alpha$ in the extension
gives $\beta_x<\alpha$.  Choose $p\in G$ whose tree height is above
$\beta_x$.  End extension fixes that level, so
\[
   (T_\alpha)_{\beta_x}=(t_p)_{\beta_x}.
\]
The ordinal $\beta_x$ belongs to $V$, and $|\beta_x|^V<\alpha$.  In the
ground model put
\[
   \kappa=\bigl|{}^{\beta_x}2\bigr|^V
      =\bigl(2^{|\beta_x|^V}\bigr)^V<\alpha
\]
and fix an injection
\[
   (t_p)_{\beta_x}\longrightarrow\kappa.
\]
The same injection exists in $V[G]$.  Since $\kappa<\alpha$ and $\alpha$
is a cardinal there, $|\kappa|^{V[G]}<\alpha$.  The level $\mathcal T_x$ is
the image of $(t_p)_{\beta_x}$ under restriction to $x$, and hence
$|\mathcal T_x|<\alpha$.  This proves
the equivalence of (a) and (b).  Since $\alpha$ is regular and uncountable in
$V$, conditions (b) and (c) are
equivalent.

Under these conditions, the restriction system is a
$\PP_\alpha\alpha$-tree.  By \cref{lem:branch-comparison}, its branches are
exactly the cofinal branches through $T_\alpha$, so the lower bound follows
from \cref{thm:countable-cover}.  A successor cardinal in $V$ is not a
strong limit cardinal in $V$, which gives the final assertion.
\end{proof}

\begin{proof}[Proof of \cref{thm:main}]
The preservation assertion is \cref{thm:closure}.  Part~\textup{(i)}
is \cref{cor:bistationary}, part~\textup{(ii)} is
\cref{thm:countable-cover}, part~\textup{(iii)} follows from
\cref{lem:wide-level} and genericity, and part~\textup{(iv)} is
\cref{cor:hm-restriction-wide}.  \Cref{prop:two-cardinal-boundary} gives the exact scaled restriction-system boundary
and the accompanying branch-cover lower bound.
\end{proof}

For every regular $\alpha\geq\omega_2$, every nonempty countable family of
designated generic branches has a bistationary common trace on $L_\alpha$, and
no countable family of cofinal branches generates $L_\alpha$.  The corresponding
trace and branch-cover conclusions hold for the Kurepa-style companion forcing,
so neither conclusion depends on unrestricted width.  The unrestricted tree clause has a different effect.  In
the unrestricted forcing, for every infinite ground-model cardinal
$\mu<\alpha$, some level of the generic tree contains a copy of $({}^\mu2)^V$.
Hence the endpoint-corrected $\PP_{\omega_2}\alpha$ restriction family is too
wide.  For the scaled $\PP_\alpha\alpha$ system, every level has size less than
$\alpha$ exactly when $\alpha$ is strongly inaccessible in $V$, and in that
case the branch-covering number of $L_\alpha$ relative to that system is at
least $\omega_1$.  A separate failure comes from the low-cofinality empty-value
convention.  It prevents the set of domains of $L_\alpha$ from containing a
club, so $L_\alpha$ is not a ladder system in the full two-cardinal sense.  The
forcing $\Sstar(\alpha)$ is $\sigma$-closed and, after adjoining a formal
maximum, $\alpha$-strategically closed.  It is therefore
$<\alpha$-distributive and preserves $\alpha$ as a regular cardinal.  The
conclusions stated here for $\Sstar(\alpha)$ concern its direct generic
extension.  No forcing equivalence
with the printed Hayut--Magidor presentation is claimed, and the results do
not settle Hayut and Magidor's question of whether cofinal catching can be
separated from stationary catching.

Several questions remain.  Is the lower bound in
\cref{cor:cover-interval} sharp under additional cardinal-arithmetic hypotheses,
or can the generic ladder-coordinate set require strictly more than $\omega_1$
branches?

\Cref{prop:kurepa-bounded-persistence} shows that the two core trace and cover
conclusions survive a Kurepa-style level bound.  At $\alpha=\omega_2$,
a distinct problem is to find a typed, size-controlled modification that
also treats the low-cofinality ladder coordinates and produces a genuine
$\PP_{\omega_2}\omega_2$-tree with a full ladder system while preserving the
trace and cover conclusions.

A further question is whether the master-condition argument in the proof of
Lemma~5.11 \cite[p.~1126]{HayutMagidor2022} admits a typed counterpart.  Relevant forcing background includes Mitchell's original tree-property forcing and later
iterated tree-property constructions
\cite{Mitchell1972,CummingsForeman1998,Krueger2008}.  For lifting and strong master conditions, see
\cite[Proposition~9.1, p.~805, and Definition~12.2, p.~814]{Cummings2010}.  A separate problem is to determine which
subsequent forcing notions preserve the trace conclusions.  This is related to
the fragility and indestructibility of tree properties
\cite{Unger2012,Unger2015}.

\medskip
\noindent\textbf{Acknowledgments.}
The author is grateful to Yair Hayut and Menachem Magidor, whose work on ladder systems motivated the questions studied here.

\end{document}